\newtheorem{theorem}{Theorem}
\newtheorem{corollary}[theorem]{Corollary}
\newtheorem{proposition}[theorem]{Proposition}
\newcommand{\field}[1]{\mathbb{#1}}
\newcommand{\R}{\field{R}}
\newcommand{\E}{\field{E}}
\newcommand{\C}{\field{C}}
\newcommand{\p}{\field{P}}
\newcommand{\Tr}{\mathrm{Tr}}
\newcommand{\Var}{\mathrm{Var}}
\newcommand{\met}[2][ccccccccccccccccccccccccccccccccc]{\left[ \begin{array}{#1} #2 \\ \end{array}\right]}
\numberwithin{equation}{section}
\title{\bf Refinements of the One Dimensional Free Poincar\'e Inequality}
\author{Christian Houdr\'e} 
\address{School of Mathematics, Georgia Institute of Technology, 
Atlanta, GA 30332, USA.} 
\email{houdre@math.gatech.edu} 
\author{Ionel Popescu} 
\address{School of Mathematics, Georgia Institute of Technology, 686 Cherry Street, 
Atlanta, GA 30332, USA and  ``Simion Stoilow'' Institute of Mathematics 
of the Romanian Academy, 21 Calea Grivi\c tei, 
Bucharest, ROMANIA.}
\email{ipopescu@math.gatech.edu,  ionel.popescu@imar.ro}
\thanks{CH: Research supported in part by 
a Simons Foundation Fellowship grant \#267336.  Many thanks to 
the Laboratoire MAS of \'Ecole Centrale, Paris, and the LPMA of the Universit\'e Pierre et Marie Curie 
for their hospitality while part of this work was done.  IP:partially supported by 
the Romanian National Authority for Scientific Research, CNCS - UEFISCDI, project 
number PN-II-RU-TE-2011-3-0259  and by a Marie Curie Action Grant PIRG.GA.2009.249200.}
\begin{document}

\date{}

\begin{abstract}
We present two extensions of the one dimensional free 
Poincar\'e inequality similar in spirit to 
two classical refinements.   

\end{abstract}

\maketitle

\section{Introduction}

The classical Poincar\'e inequality for a probability measure $\mu$ on $\R^{d}$  
states that there is a constant $\lambda>0$, such that for any real-valued compactly supported 
smooth function $\phi$ defined on $\R^d$, 
\begin{equation}\label{ei:0}
\lambda\Var_{\mu}(\phi)\le \int |\nabla \phi|^{2}d\mu,
\end{equation}
where $\Var_{\mu}(\phi)=\int \phi^{2}\,d\mu-(\int \phi\,d\mu)^{2}$ is the variance of 
$\phi$ with respect to $\mu$.   Another, well known, interpretation of this inequality is 
to view $\lambda$ as the spectral gap of the generator $L$ of the Dirichlet 
form $\Gamma(\phi,\phi)=\int |\nabla \phi|^{2}\,d\mu$ for which $\mu$ is an invariant 
measure.    

For the standard Gaussian measure $\mu$ in $\R^{d}$, an extension of the classical Poincar\'e  
inequality due to Houdr\'e-Kagan~\cite{MR1308667} 
states that, for any smooth compactly supported 
function $\phi$ on $\R^{d}$ and $n\ge1$, 
\begin{equation}\label{ei:1}
\sum_{k=1}^{2n}\frac{(-1)^{k+1}}{k!}\int |\nabla^{k}\phi|^{2}d\mu
\le \Var_{\mu}(\phi)\le \sum_{k=1}^{2n-1}\frac{(-1)^{k+1}}{k!}\int |\nabla^{k}\phi|^{2}d\mu.   
\end{equation}
This last inequality which can be viewed as a Taylor type expansion 
for $\Var_{\mu}(f)$ was extended to a general Markov operators 
framework by Ledoux in \cite{L7}. 

In a different direction \eqref{ei:0} was also extended 
by Brascamp-Lieb~\cite[Theorem 4.1]{Brasc-Lieb} to 
measures on $\R^{d}$ of the form $\mu(dx)=e^{-V(x)}dx$ 
with $V''(x)$ positive definite at each point $x\in\R^{d}$.  
The extension asserts that 
\begin{equation}\label{ei:2}
\Var_{\mu}(\phi)\le \int \langle (V^{\prime\prime})^{-1}\nabla \phi,\nabla \phi\rangle d\mu, 
\end{equation}
for any compactly supported function $\phi$.  
For one dimensional measures, a further extension of \eqref{ei:0} in 
the spirit of \eqref{ei:1} is also possible (see \cite{i10}).   

With the recent interest in high dimensional phenomena, it is quite 
natural to ask what happens with these functional inequalities in the limit.  
One such setup is to apply the classical inequalities to some standard 
random matrices models and then analyze the limiting object.   
Since large random matrices have deep connections with free probability, 
it is also natural to interpret 
the limiting inequalities as the free counterparts of the classical inequalities.   
This is particularly true in view of the random matrix approach, as developed in \cite{HPU1},  
to the Biane-Voiculescu~\cite{BV} transportation inequality 
and the free Log-Sobolev inequality, which first appeared 
in \cite{MR1851716}, and was subsequently analyzed with random 
matrices in \cite{Biane2}. 

In \cite{i2} and \cite{i5} some of these inequalities are studied as stand alone 
inequalities and analyzed using tools from mass transportation.  In \cite{i5}, a version 
of the Poincar\'e inequality is introduced using random matrix 
heuristics but proved without references  
to random matrix models.  For the standard semicircular 
law $\alpha(dx)=\mathbbm{1}_{[-2,2]}(x)\frac{\sqrt{4-x^{2}}}{2\pi}dx$,  
this Poincar\'e inequality states that for any smooth 
function $\phi$ on $[-2,2]$, 
\begin{equation}\label{ei:3}
\int_{-2}^{2}\int_{-2}^{2}\left( \frac{\phi(x)-\phi(y)}{x-y} 
\right)^{2}\frac{(4-xy)}{4\pi^{2}\sqrt{(4-x^{2})(4-y^{2})}}dx\,dy\le \int (\phi' )^{2}\,d\alpha.
\end{equation}
Note that left-hand side of \eqref{ei:3}, which replaces the classical variance term, is 
essentially the fluctuation quantity of random matrices.   Further, note that 
\eqref{ei:3} has a different flavor  
from its classical counterpart.  For example, in case of the 
standard Gaussian measure, 
\eqref{ei:0} is the expression of the spectral gap of the Ornstein-Uhlenbeck 
operator.   In the free case, and as shown in \cite{i5}, \eqref{ei:3} is equivalent to 
\[
\mathcal{N}\le \mathcal{L}
\]
where $(\mathcal{L}\phi)(x)=-(4-x^{2})\phi^{\prime\prime}(x)+x\phi^\prime(x)$ and $\mathcal{N}$ 
are respectively the Jacobi operator and the counting number operator for the 
orthonormal basis of Chebyshev polynomials $T_{n}(x/2)$ of $L^{2}(\beta)$, where $\beta$ is the 
arcsine measure $\beta(dx)=\mathbbm{1}_{[-2,2]}(dx)\frac{dx}{\pi\sqrt{4-x^{2}}}$.    
Here we interpret the operators as unbounded operators on $L^{2}(\beta)$ 
which is to be contrasted with the classical case where the 
left-hand side is simply a 
projection operator.     

The inequality \eqref{ei:3} can be realized as the limiting case of the classical 
Poincar\'e inequality applied to the distribution of the GUE-ensemble.  
On the other hand, the inequality \eqref{ei:0} is valid 
for measures $\mu(dx)=e^{-V(x)}dx$ on $\R^{d}$ with $V^{\prime\prime}(x)\ge\lambda>0$, 
as easily seen from \eqref{ei:2}.  
Now, let $V:\R\to\R$ be such that $V^{\prime\prime}\ge\lambda>0$, and 
let $\mu_{n}(dX)=\frac{1}{Z_{n}(V)}e^{-n\Tr V(X)}dX$ be the corresponding probability measure on 
$n\times n$ Hermitian matrices.   Let further $\mu_{V}$ be the equilibrium measure, i.e., 
the unique probability measure minimizing the functional 
\begin{equation}\label{ei:7}
E_{V}(\mu)=\int V\,d\mu-\iint \log|x-y|\mu(dx)\mu(dy).
\end{equation}
Since $V$ is convex, the support of $\mu_{V}$ is an interval and, up to 
rescaling, we may assume for simplicity that this support is $[-2,2]$.  
In this setup, applying the Poincar\'e inequality to the measures $\mu_n$ and 
functions of the form 
$\Phi(X)=\Tr(\phi(X))$, with $\phi:\R\to\R$ smooth and compactly supported lead (see \cite{i5} 
for details) to the limiting inequality:   
\begin{equation}\label{ei:5}
 \lambda  \int_{-2}^{2}\int_{-2}^{2}\left( \frac{\phi(x)-\phi(y)}{x-y}  
 \right)^{2}\frac{(4-xy)}{4\pi^{2}\sqrt{(4-x^{2})(4-y^{2})}}dx\,dy\le\int (\phi')^{2}d\mu_{V}.
\end{equation}
 
This inequality was further investigated in \cite{i7} in relation to other 
free functional inequalities such as the transportation, the Log-Sobolev, and the HWI 
ones.   The main tool involved there is the counting number operator 
$\mathcal{N}$ alluded to above and given by  
\[
 (\mathcal{N}\phi)(x) =\int y\phi^\prime(y)\beta(dy)  +x\int \phi^\prime(y)\beta(dy)
       - (4-x^{2})\int \frac{\phi^\prime(x)-\phi^\prime(y)}{x-y} \, \beta(dy).  
\]

A first primary purpose of the present paper is to refine the 
inequality \eqref{ei:3}, which is the free Poincar\'e inequality 
for the semicircular law in the spirit of the classical refinement \eqref{ei:1}.   
The corresponding statement 
is that for any smooth function $\phi$ on $[-2,2]$, and any positive integer $k$, 
\[
\sum_{l=1}^{2k}\!\frac{(-1)^{l-1}}{l}\|\partial^{(l-1)}\phi^\prime\|^{2}_{\alpha^{\otimes l}}
\le\!\int_{-2}^{2}\!\int_{-2}^{2}\!\!\left(\!\frac{\phi(x)
-\phi(y)}{x-y}\!\right)^2\!\!\!\!\frac{(4-xy)}{4\pi^{2}\sqrt{(4-x^{2})(4-y^{2})}}dx\,dy
\le\!\sum_{l=1}^{2k-1}\!\frac{(-1)^{l-1}}{l}\|\partial^{(l-1)}
\phi^\prime\|^{2}_{\alpha^{\otimes l}},
\]
where $\partial$ is the non-commutative derivative introduced in \cite{V5}, 
and where the $\partial^{(l)}$ are its higher versions.   
The above inequality is, in fact, a 
consequence of an exact representation with remainder 
(depending on $\mathcal{M}$, the counting number operator for 
the rescaled Chebyshev polynomials 
of the second kind) for the sandwiched term.   
This is contained in Theorem~\ref{t:10}.  
The proof of this result is based on two main ingredients, 
a first one is the basic relation 
between the operator 
$\mathcal{N}$ and $\mathcal{M}$ which appears 
in Theorem~\ref{t:2} and states that 
\begin{equation}\label{ei:6}
\langle\mathcal{N}\phi,\phi\rangle
=2\langle (\mathcal{M}+I)^{-1}\phi^\prime,\phi^\prime \rangle_{\alpha},
\end{equation}
where the inner product on the left-hand side is the one in $L^{2}(\beta)$, while on 
the right-hand side is the one in $L^{2}(\alpha)$.  
This statement, by itself, is enough to get the 
free Poincar\'e inequality \eqref{ei:3} which follows from that 
$\mathcal{M}$ is a non-negative operator.   

The second ingredient is based on an idea exposed in \cite{i10} which gives a 
refinement of the Brascamp-Lieb inequality \eqref{ei:2} in the spirit of the 
expansion from \eqref{ei:1}.  At its roots there are some commutation relations.   
To wit a bit on this idea, the starting point is the fact that 
\[
\langle (\mathcal{M}+I)^{-1}\phi^\prime,\phi^\prime \rangle_{\alpha}
=\langle \phi^\prime,\phi^\prime\rangle -\langle \mathcal{M}(\mathcal{M}+I)^{-1}\phi^\prime,\phi^\prime \rangle_{\alpha}
\]
and that $\mathcal{M}=\partial^{*}\partial$, where $\partial^{*}$ is the 
adjoint of $\partial$.  This can then be continued with 
\[
\langle \mathcal{M}(\mathcal{M}+I)^{-1}\phi^\prime,\phi^\prime \rangle_{\alpha}
=\langle \partial(\mathcal{M}+I)^{-1}\phi^\prime,\partial\phi^\prime \rangle_{\alpha\otimes\alpha}
=\langle (\mathcal{M}^{(2)}+2I)^{-1}\partial\phi^\prime,\partial \phi^\prime \rangle_{\alpha\otimes \alpha},
\]
where $\mathcal{M}^{(2)}$ is an extension of the operator $\mathcal{M}$ to tensors, in a 
natural way, as $\mathcal{M}^{(2)}(P\otimes Q)=(\mathcal{M}P)\otimes Q+P\otimes (\mathcal{M}Q)$, 
for any polynomials $P$ and $Q$.  Along the way, we also 
used an important commutation relation between $\mathcal{M}$ and the 
derivative operator $\partial$.   
Now, an iteration leads to the the basic expansion 
\[
\langle (\mathcal{M}+I)^{-1}\phi^\prime,\phi^\prime \rangle_{\alpha}
=\langle \phi^\prime,\phi^\prime\rangle 
-\langle (\mathcal{M}^{(2)}+2I)^{-1}\partial\phi^\prime,\partial\phi^\prime \rangle_{\alpha\otimes\alpha}.
\]
This procedure can then be pursued to get more terms as 
detailed in Section~\ref{S:1dim}.

As a second purpose, we wish to extend the free Poincar\'e inequality 
\eqref{ei:5} to a free Brascamp--Lieb inequality similar to \eqref{ei:2} 
in the form (presented in Theorem~\ref{t:11})  
\begin{equation}\label{h:1}
\int_{-2}^{2}\int_{-2}^{2}\left( \frac{\phi(x)-\phi(y)}{x-y}\right)^{2}\frac{(4-xy)dx\,dy}
{4\pi^{2}\sqrt{(4-x^{2})(4-y^{2})}}\le \int \frac{{\phi^\prime}^{2}}{V^{\prime\prime}}d\mu_{V},
\end{equation}
which holds for any smooth function $\phi$ on $[-2,2]$.   The main 
idea in proving \eqref{h:1} is similar to one outlined by 
Helffer in \cite{MR1624506} and consists in writing the left-hand side of \eqref{h:1} 
as $\langle (\mathcal{M}_{V} +V^{\prime\prime})^{-1}\phi,\phi \rangle_{\mu_{V}}$, for 
some operator 
${\mathcal M}_{V}$, presumably unbounded and non-negative definite.   
To see what the candidate for ${\mathcal M}_V$ should be, we use heuristics 
from the classical result applied to random matrices.   
Once this operator is settled, then the proof follows once 
it is shown that 
$\langle (\mathcal{M}_{V} +V^{\prime\prime})^{-1}\phi,\phi \rangle_{\mu_{V}}$ 
does not depend on the potential $V$.  
If this is indeed the case, then choosing our favorite potential, namely, 
$V(x)=x^{2}/2$, 
then the left-hand side of \eqref{h:1} is nothing but \eqref{ei:6}.  
To some extent, at the bottom of this argument is 
the fact that the variance term on the left-hand side 
of \eqref{h:1} is universal, by which we mean 
universality of the fluctuations of random matrices.   

Both extensions provided here are sharp, i.e., we can find non-trivial 
functions for 
which equality is attained.  

The paper is organized as follows.   Section~\ref{s:1} introduces the main notations 
and some preliminary facts.    Section~\ref{s:2} contains the main operators and 
their interrelations which are partially imported from \cite{i7}.   
Section~\ref{s:3} is an intermezzo containing an interpolation equality 
which parallels the classical one.  It is also a motivation for a brief description 
for the free Ornstein-Uhlembeck semigroup seen from a different perspective.  
This, in turn, provides yet another (and simple) 
proof of the free one dimensional Poincar\'e inequality for the semicircular law.  
Section~\ref{S:1dim} gives the main refinement associated with the semicircular law, 
extending the operator $\mathcal{M}$ to tensors and 
unearthing the main commutation relations.  These are then used 
in the expansion of the variance like term, finally leading to the free version of 
Houdr\'e-Kagan \eqref{ei:1}.  Next, Section~\ref{s:heur} is a purely heuristic section which 
motivates the introduction of the main operator associated to the equilibrium measure for 
a potential $V$.   It also serves as a quick recapitulation of Helffer's arguments 
(from \cite{MR1624506}) on obtaining the Brascamp-Lieb result.  
These are, finally, used in Section~\ref{s:7} 
to prove the free version of Brascamp-Lieb \eqref{ei:2}.

\section{Preliminaries}\label{s:1}

\subsection{Random matrices and logarithmic potentials}  

The random matrix ensembles we deal with 
are prescribed by the probability measures on the set $\mathcal{H}_{n}$ of 
$n\times n$ Hermitian matrices determined by a potential $V:\R\to\R$ via
\begin{equation}\label{e0:0}
\p_{V}^{n}(dX)=\frac{1}{Z_{n}(V)}e^{-n\Tr V(X)}dX.
\end{equation}
Here, 
\[
Z_{n}(V)=\int e^{-n\Tr V(X)}dX
\]
is simply the normalizing constant which makes 
$\p_{V}^{n}$ a probability measure.    

It is known, see \cite{Deift1} or \cite{J}, that for any 
such $V$ 
\begin{equation}\label{e0:1}
-\lim_{n\to\infty}\frac{1}{n^{2}}\log Z_{n}(V)= E_{V}=\inf\left\{ E_{V}(\mu):\mu\in\mathcal{P}(\R)\right\}, 
\end{equation}
where 
\[
E_{V}(\mu):=\int Vd\mu-\iint \log|x-y|\mu(dx)\mu(dy),
\]   
and $\mathcal{P}(\R)$ is the set of probability measures on $\R$.  
For $V$ having enough growth at infinity, for instance, if 
\[
\lim_{|x|\to\infty}(V(x)-2\log|x|)=\infty,
\]
this minimization problem is known to have a unique solution $\mu_{V}$ and 
standard references on this are \cite{Deift1,J,ST}.  
The variational characterization of the measure $\mu_{V}$ is 
\begin{equation}\label{e0:4}
\begin{split}
V(x)&\ge 2\int \log|x-y|\mu(dy)+C \quad\text{quasi-everywhere on }\R \text{ and } \\ 
V(x)&= 2\int \log|x-y|\mu(dy)+C 
\quad\text{quasi-everywhere on}\:\: \mathrm{supp}\mu. \\
\end{split}
\end{equation} 
Therefore, taking the derivative in the second line of \eqref{e0:4}, it follows that
on the support of $\mu_{V}$ (assuming the support is a finite union of intervals),  
\begin{equation}\label{e0:5}
V^\prime(x)= p.v.\int\frac{2}{x-y}\mu_{V}(dy), 
\end{equation}
where, as usual, $p.v.$~stands for the Cauchy principal value.   
In this paper we limit ourselves to a smooth $V$ which is also convex, in which case 
the support of the measure 
$\mu_{V}$ is a single interval (see \cite{ST}).  
We can, in fact, weaken the smoothing condition on $V$ and for the main result, 
it suffices for $V$ to be $C^{4}$--regular.   
To shorten the notations, we also denote the principal value of a 
measure $\nu$ by $H\nu$, i.e., $(H\nu)(x):=p.v.\int\frac{2}{x-y}\nu(dy)$.

In addition to \eqref{e0:1}, another important convergence property is that 
for any bounded continuous function $g$ on the real line,
\begin{equation}\label{e0:2}
\int \frac{1}{n}\Tr (g(X))\,\p_{V}^{n}(dX)\xrightarrow[n\to\infty]{} \int g \,d\mu_{V}.
\end{equation}
In fact, something even stronger takes place here, namely, $\frac{1}{n}\Tr (g(X))$ converges 
almost surely to 
$\int g \,d\mu_{V}$, as it can be seen, for instance, from \cite{MR1746976}.  
Above, the $GUE(\frac{1}{\sqrt{n}})$ ensemble corresponds to $V(x)=x^{2}/2$.

Let us now turn to some of the basic operators which play an important 
r$\hat{\text o}$le in the treatment of the 
free Poincar\'e inequality.   There are two important 
measures on $[-2,2]$, the semicircular one 
and the arcsine one, respectively defined by    
\begin{equation}\label{e0:ab}
\alpha(dx)=\frac{\sqrt{4-x^{2}}}{2\pi}dx, \quad \text{ and }\quad \beta(dx)
=\frac{dx}{\pi\sqrt{4-x^{2}}}.  
\end{equation}
Most of the action takes place around the arcsine measure $\beta$ and so we 
use $\langle\cdot,\cdot \rangle$ to denote the inner product 
in $L^{2}(\beta)$, while for any other measure $\mu$, 
$\langle\cdot,\cdot \rangle_{\mu}$ is the inner product in $L^{2}(\mu)$.   
The reason for dealing with the interval $[-2,2]$ is that the 
semicircular law $\alpha$ has mean zero and variance one.  
Another important reason is unfolded in \cite{i6} and \cite{i7} and 
stems from the prominent r$\hat{\text o}$le played by the 
Chebyshev polynomials in analyzing the logarithmic potentials.  
Thus, for convex potentials $V$ the support of the equilibrium measure is a 
single interval.  Thus, by rescaling, namely replacing 
$V(x)$ by $V(c x+b)$ for appropriate $c>0$ and $b$ real, 
the support of $\mu_{V}$ can always be arranged to be $[-2,2]$. 

Another measure which plays a r$\hat{\text o}$le in the sequel is  
\begin{equation}\label{e:omega}
\omega(dx,dy)=\mathbbm{1}_{[-2,2]}(x)\mathbbm{1}_{[-2,2]}(y)
\frac{(4-xy)}{4\pi^{2}\sqrt{(4-x^{2})(4-y^{2})}}dx\,dy. 
\end{equation}

We introduce next the appropriate orthogonal basis associated 
to the measures $\alpha$ and $\beta$.  
These are
\begin{equation}\label{e0:6}
\phi_{n}(x)=T_{n}\left(\frac{x}{2} \right) \quad
    \text{ and } \quad \psi_{n}(x)=U_{n}\left(\frac{x}{2} \right), \quad \text{ for }n\ge0. 
\end{equation}
Here $T_n(x)$ is the $n$th {\em Chebychev polynomials of the first kind} defined via  
$T_n(\cos\theta)=\cos(n\theta)$, while $U_{n}$ is the 
$n$th \emph{Chebyshev polynomials of the second kind} defined  
via  $U_{n}(\cos \theta)=\frac{\sin (n+1)\theta}{\sin\theta}$.   
Adjusting a little the polynomials $T_{n}$ as $\tilde{T}_{0}=T_{0}$ 
and $\tilde{T}_{n}(x)=\sqrt{2}T_{n}(x)$,  it is easily seen 
that $\{\tilde{T}_{n}(x/2)\}_{n\ge0}$ is an orthonormal basis for $L^{2}(\beta)$.   
Similarly, $\{U_{n}(x/2)\}_{n\ge0}$ forms an orthonormal basis for $L^{2}(\alpha)$.   
Other relations between these functions, of later use and, which can be checked 
effortlessly include
\begin{equation}\label{e0:7}
\phi_{n}^\prime=\frac{n}{2}\psi_{n-1}, 
\end{equation}
and
\begin{equation}\label{e0:7b}
-2\psi_{n-1}'(x)+\frac{x}{2}\psi_{n}'(x)=n \psi_{n}(x).
\end{equation}

A further fact, used several times below, is the following relationship:  
\begin{equation}\label{ep:22}
\frac{\psi_{n}(x)-\psi_{n}(y)}{x-y}=\sum_{k=0}^{n-1}\psi_{k}(x)\psi_{n-1-k}(y), 
\end{equation}
which, for instance, can be deduced from the expression for the 
generating function of the Chebyshev polynomials of the second kind 
given by:   
\[
\sum_{n = 0} ^\infty r^{n}U_{n}(x)=\frac{1}{1-2rx+r^{2}} \, , \quad  r\in(-1,1).
\]

\subsection{Random matrices and fluctuations}  

By the study of the fluctuations associated to 
the random matrix models introduced above, we mean the study of the limiting 
behavior of $\mathrm{Tr}(\phi(M))- \E[ \mathrm{Tr}(\phi(M))]$, as $n$ tends to $\infty$, e.g., 
see \cite{J} and \cite{Pastur}.   Assuming that $\mu_{V}$ is supported on $[-2,2]$, 
the variance of this random variable, with respect to $\p_{V}^{n}$ is given in the limit by 
\begin{equation}\label{ep:23}
\int_{-2}^{2}\int_{-2}^{2}\left( \frac{\phi(x)-\phi(y)}{x-y}\right)^{2}
\frac{(4-xy)}{4\pi^{2}\sqrt{(4-x^{2})(4-y^{2})}}dx\,dy
=\iint\left( \frac{\phi(x)-\phi(y)}{x-y}\right)^{2}\omega(dx,dy), 
\end{equation}
a quantity which plays in our context a r$\hat{\text o}$le analogous 
to the one of the variance in the classical setting.

\subsection{Semicircular systems}\label{s:SS}  

Here we summarize a few facts about 
the $R$-transform and introduce the notion of a semicircular system.   

A non-commutative probability space is a pair $(\mathcal{A},\phi)$, where 
$\mathcal{A}$ is a unital $*$-algebra 
and $\phi$ is a trace on it such that $\phi(1)=1$.   
For basic notions of freeness we refer the reader to \cite{MR1217253}.  Nevertheless, 
we mention here the version of $R$-transform in the spirit of \cite{MR1436532}.  
All non-commutative variables $a,b$ considered in this section are 
assumed to be self-adjoint, i.e. $a^{*}=a$ and $b^{*}=b$.  
 
Now, given non-commutative variables $a_{1},a_{2},\dots,a_{n}$ in $\mathcal{A}$, 
the moment generating function of $(a_{1},a_{2},\dots,a_{n})$ is the formal power 
series in non-commuting variables $z_{1},z_{2},\dots,z_{n}$ described by 
\[
M_{a_{1},a_{2},\dots,a_{n}}(z_{1},z_{2},\dots,z_{n})=\sum_{s=1}^{\infty}\sum_{i_{1},i_{2},\dots,i_{s}
=1}^{n}\phi(a_{i_{1}}a_{i_{2}}\dots a_{i_{s}})z_{i_{1}}z_{i_{2}}\dots z_{i_{s}}. 
\] 
The $R$-transform is also a formal power series in non-commuting variables 
$z_{1},z_{2},\dots,z_{n}$ described by 
 \[
 R_{a_{1},a_{2},\dots,a_{n}}(z_{1},z_{2},\dots,z_{n})
 =\sum_{s=1}^{\infty}\sum_{i_{1},i_{2},\dots,i_{s}=1}^{n}
 k_{s}(a_{i_{1}},a_{i_{2}},\dots,a_{i_{s}})z_{i_{1}}z_{i_{2}}\dots z_{i_{s}},
 \]  
where the $k_{s}$ are the free cumulants.  
The moment generating function and the $R$ transforms are related by 
\begin{equation}\label{e:MR}
M=R\boxed{\star}Moeb \text{ and } R=M\boxed{\star}Zeta, 
\end{equation}
where $\boxed{\star}$ is described in \cite{MR1268597} and also 
in \cite{MR1436532} in terms of the lattice of the non-crossing partitions.  
Here
\[
Zeta(z_{1},z_{2},\dots,z_{n})=\sum_{s=1}^{\infty}
\sum_{i_{1},i_{2},\dots,i_{s}=1}^{n}z_{i_{1}}z_{i_{2}}\dots z_{i_{s}}
\]  
and 
\[
Moeb(z_{1},z_{2},\dots,z_{n})
=\sum_{s=1}^{\infty}\sum_{i_{1},i_{2},\dots,i_{s}=1}^{n}(-1)^{s+1}
\frac{(2s-2)!}{(s-1)!s!}z_{i_{1}}z_{i_{2}}
\dots z_{i_{s}}\]
are the Zeta and Moebius functions in $n$ variables associated to the lattice of 
non-crossing partitions (see \cite[Eqs.  3.10 and 3.11]{MR1436532}).  
The only point we need to make here is that $R$ determines $M$,  
and that vice versa $M$ determines $R$.  In particular, if 
$R_{a_{1},a_{2},\dots,a_{n}}(z_{1},z_{2},\dots,z_{n})= 
R_{b_{1},b_{2},\dots,b_{n}}(z_{1},z_{2},\dots,z_{n})$, 
then $M_{a_{1},a_{2},\dots,a_{n}}(z_{1},z_{2},\dots,z_{n}) 
= M_{b_{1},b_{2},\dots,b_{n}}(z_{1},z_{2},\dots,z_{n})$ which means 
that $\phi(a_{i_{1}}a_{i_{2}}\dots a_{i_{s}})=\phi(b_{i_{1}}b_{i_{2}}\dots b_{i_{s}})$, 
or otherwise stated, the mixed moments are the same.  
 
A main property of the $R$-transform is 
that $(a_{1},a_{2},\dots,a_{n})$ and $(b_{1},b_{2},\dots,b_{m})$ are free 
if and only if 
\begin{equation}\label{e:500}
R_{a_{1},a_{2},\dots,a_{n},b_{1},b_{2},\dots,b_{m}}(z_{1},z_{2},\dots,z_{n},z_{1}',z_{2}',\dots,z_{m}')=R_{a_{1},a_{2},\dots,a_{n}}(z_{1},z_{2},\dots,z_{n})
+ R_{b_{1},b_{2},\dots,b_{m}}(z_{1}',z_{2}',\dots,z_{m}'). 
\end{equation}
The second property is that if $a$ is a standard semicircular element 
(i.e. $\phi(a^{k})=\int x^{k}\alpha(dx)$), then 
\begin{equation}\label{e:501}
R_{a}(z)=z^{2}.
\end{equation}

Next, we say that a tuple $(b_{1},b_{2},\dots,b_{n})$ is a 
\emph{standard semicircular system} if the variables are free and each 
of them is a standard semicircular element.  
In particular, in light of \eqref{e:500} and \eqref{e:501}, this means that 
\begin{equation}\label{e:SSS}
R_{b_{1},b_{2},\dots,b_{n}}(z_{1},z_{2},\dots,z_{n})=z_{1}^{2}+z_{2}^{2}+\dots +z_{n}^{2}.  
\end{equation}

Further, we say that a tuple $(a_{1},a_{2},\dots,a_{n})$ of centered variables  
(i.e. $\phi(a_{i})=0$, $1\le i\le n$) is a \emph{semicircular system} if 
\begin{equation}\label{e:SS}
R_{a_{1},a_{2},\dots,a_{n}}(z_{1},z_{2},\dots,z_{n})=\sum_{i,j=1}^{n}c_{ij}z_{i}z_{j}.
\end{equation}
As it turns out, the coefficients $c_{ij}$ are determined by 
$c_{ij}=\phi(a_{i}a_{j})$, hence the matrix $C=\{ c_{ij}\}_{i,j=1}^{n}$ is simply 
the covariance matrix of the tuple.  Moreover, since $\phi$ is a trace, 
$C$ is a real valued symmetric non-negative definite matrix.  
Note that this notion of semicircular system mimics the 
classical notion of a (multidimensional) Gaussian random variable, in that 
the logarithm of the characteristic function is a quadratic function.  
Also, as in the classical case, a semicircular system is completely determined by 
the covariance matrix, by which 
we mean that the mixed moments of $(a_{1},a_{2},\dots,a_{n})$ are determined by 
the covariance matrix and the inversion formula \eqref{e:MR}. 
 
The main results to be used, are contained in 
the following statement.
\begin{proposition}\label{p:1SS}
\begin{enumerate}
\item  Let $(b_{1},b_{2},\dots, b_{m})$ be a semicircular system with 
covariance $C$.  Let $D$ be an $n\times m$ matrix and let $a_{i}=\sum_{j=1}^{m}d_{ij}b_{j}$.  Then $(a_{1},a_{2},\dots,a_{n})$ is a semicircular system with 
covariance matrix $\tilde{C}=DCD^{t}$.   
\item  Let $C$ be an $n\times n$ real symmetric and non-negative definite matrix,  
and let $D=C^{1/2}$.  Then for any standard semicircular system 
$(b_{1},b_{2},\dots, b_{n})$, the tuple $(a_{1},a_{2},\dots,a_{n})$ with 
$a_{i}=\sum_{j=1}^{n}d_{ij}b_{j}$ is a semicircular system with covariance matrix $C$.  
In particular, for any symmetric  non-negative definite matrix $C$, there exists a semicircular system 
with covariance matrix $C$.   

\item If two semicircular systems have the same covariance matrix, then they have the 
same moments.  More precisely, if $(a_{1},a_{2},\dots,a_{n})$  and 
$(b_{1},b_{2},\dots,b_{n})$ are two semicircular systems with the same 
covariance matrix, then $\phi(a_{i_{1}}\dots a_{i_{s}})=\phi(b_{i_{1}}\dots b_{i_{s}})$,  
for any $1\le i_{1},i_{2},\dots,i_{s}\le n$ and any $s\ge1$.   
\end{enumerate}
\end{proposition}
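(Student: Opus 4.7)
The proof strategy rests on two basic properties of the free $R$-transform: the characterization \eqref{e:SS} stating that a tuple is semicircular iff its $R$-transform is a purely quadratic form in the non-commuting variables, and the inversion formula \eqref{e:MR} which shows that the $R$-transform and the moment generating function determine each other. A third key input is the multilinearity of the free cumulants $k_s(\cdot,\dots,\cdot)$ in each of their arguments, which follows from the moment-cumulant relation over the lattice of non-crossing partitions.

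For part (1), I would use multilinearity of $k_s$ together with $a_i = \sum_j d_{ij}b_j$ to write
\[
k_s(a_{i_1},\dots,a_{i_s}) = \sum_{j_1,\dots,j_s} d_{i_1 j_1}\cdots d_{i_s j_s}\, k_s(b_{j_1},\dots,b_{j_s}).
\]
Since $(b_1,\dots,b_m)$ is semicircular, \eqref{e:SS} forces $k_s(b_{j_1},\dots,b_{j_s}) = 0$ whenever $s \neq 2$, while $k_2(b_j,b_k) = c_{jk}$. Hence all higher cumulants of the $a_i$'s vanish and
\[
k_2(a_i,a_j) = \sum_{k,l} d_{ik}\, c_{kl}\, d_{jl} = (DCD^t)_{ij},
\]
so reading this back into \eqref{e:SS} shows that $(a_1,\dots,a_n)$ is a semicircular system with covariance $\tilde C = DCD^t$, as claimed.

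Part (2) is then an immediate application of part (1). Taking $D = C^{1/2}$ (the symmetric non-negative square root of $C$) and starting from any standard semicircular system $(b_1,\dots,b_n)$---whose existence is a well-known fact, e.g., via the full Fock space construction---part (1) gives that $a_i = \sum_j d_{ij}b_j$ is semicircular with covariance $D\cdot I\cdot D^t = D^2 = C$. For part (3), two semicircular systems sharing the same covariance $C$ have the same $R$-transform $\sum_{i,j} c_{ij}z_i z_j$ by \eqref{e:SS}, hence the same moment generating function by the inversion formula \eqref{e:MR}, and therefore the same mixed moments.

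The only point requiring real care---and hence the main technical obstacle if one insists on a fully self-contained proof---is the multilinearity of the free cumulants. This is most cleanly obtained from $R = M\boxed{\star}Moeb$: expanding the boxed convolution as a sum over non-crossing partitions expresses each $k_s(a_{i_1},\dots,a_{i_s})$ as a linear combination of mixed moments $\phi(a_{j_1}\cdots a_{j_r})$, and each such moment is tautologically multilinear in its arguments. Everything else in the proof is straightforward bookkeeping with the two identities \eqref{e:SS} and \eqref{e:MR}.
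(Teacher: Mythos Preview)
Your proof is correct and follows essentially the same route as the paper: multilinearity of the cumulants to reduce part~(1) to the vanishing of $k_s$ for $s\ne 2$ and the identification $k_2(b_j,b_l)=c_{jl}$, part~(2) as a direct specialization with $D=C^{1/2}$, and part~(3) via the inversion \eqref{e:MR}. The only difference is that you spell out why the cumulants are multilinear, whereas the paper simply invokes it.
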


\begin{proof}
\begin{enumerate}
\item  First, by the very definition of the $R$ transform and the linearity of the cumulants, 
it follows that 
\[
\begin{split}
R_{a_{1},a_{2},\dots,a_{n}}(z_{1},z_{2},\dots,z_{n})
&= \sum_{s=1}^{\infty}
\sum_{i_{1},i_{2},\dots,i_{s}=1}^{n}
k_{s}(a_{i_{1}},a_{i_{2}},\dots,a_{i_{s}})z_{i_{1}}z_{i_{2}}\dots z_{i_{s}} \\
&=
\sum_{s=1}^{\infty}\sum_{i_{1},i_{2},\dots,i_{s}=1}^{n} \sum_{j_{1},j_{2},\dots, j_{s}=1}^{m}
d_{i_{1},j_{1}}d_{i_{2},j_{2}}\dots d_{i_{s},j_{s}}
k_{s}(b_{j_{1}},b_{j_{2}},\dots,b_{j_{s}})z_{i_{1}}z_{i_{2}}\dots z_{i_{s}}.
\end{split}
\]
Next, by the definition of $ R_{b_{1},b_{2},\dots,b_{m}}(z_{1},z_{2},\dots,z_{m})$, and 
the quadratic assumption in $z_{1},\dots, z_{m}$, we infer that 
\[
k_{s}(b_{j_{1}},b_{j_{2}},\dots,b_{j_{s}})=0 
\text{ if }s\ne 2 \text{ and } k_{2}(b_{j},b_{l})=c_{jl}. 
\]
In turn, this implies that (denoting by $\tilde{c}_{i,j}$ the entries 
of $\tilde{C}$)
\[
R_{a_{1},a_{2},\dots,a_{n}}(z_{1},z_{2},\dots,z_{n})=
\sum_{i_{1},i_{2}=1}^{n} \sum_{j_{1},j_{2}=1}^{m}d_{i_{1},j_{1}}d_{i_{2},j_{2}}
c_{j_{1},j_{2}}z_{i_{1}}z_{i_{2}} 
= \sum_{i_{1},i_{2}=1}^{n} \tilde{c}_{i_{1},i_{2}}z_{i_{1}}z_{i_{2}}, 
\]
which is precisely what needed to be proved.  
 
\item This follows from the previous item combined with the fact that the covariance 
matrix of a standard semicircular system is the identity matrix.   
 
\item This is the uniqueness of the moment generating function as it follows, for 
example, from \eqref{e:MR}.  \qedhere
\end{enumerate}
\end{proof}

\section{The main operators}\label{s:2}

We are now ready to introduce the main operators of interest.  
For a $C^{2}$ function, $\phi:[-2,2]\to\R$, set
\begin{equation}\label{e0:EN}
\begin{split}
(\mathcal{E}\phi)(x)&=-\int \log|x-y|\phi(y)\beta(dy), \\
(\mathcal{F}\phi)(x)&=-\int \log|x-y|\phi(y)\alpha(dy),  \\
(\mathcal{N}\phi)(x)&=\int y\phi^\prime(y)\beta(dy) 
+x\int \phi^\prime(y)\beta(dy)
- (4-x^{2})\int \frac{\phi^\prime(x)-\phi^\prime(y)}{x-y} \,  \beta(dy), \\ 
(\mathcal{M}\phi)(x)&=2\,p.v.\int \frac{\phi(x)-\phi(y)}{(x-y)^{2}}\alpha(dy)
= \lim_{\epsilon\searrow0} 2\int_{|x-y|\ge\epsilon} \frac{\phi(x)-\phi(y)}{(x-y)^{2}}\alpha(dy).
\end{split} 
\end{equation}

Given a measure $\mu$ on $[-2,2]$, let 
\[
L^{2}_{0}(\mu)=\left\{ f\in L^{2}(\mu): \int f\,d\mu=0\right\}.
\]

Below, is a list of relationships between the operators just defined which is 
mainly imported from \cite[Proposition 1]{i7}.    

\begin{proposition}\label{p:10}
\begin{enumerate}
\item $\mathcal{E}$ maps $C^{2}([-2,2])$ to $C^{2}([-2,2])$ and can be extended to a 
bounded self-adjoint  operator from $L^{2}(\beta)$ into itself.   
\item For any $C^{2}$ function $\phi \in L^{2}_{0}(\beta)$, 
\begin{equation}\label{ep:6}
\begin{split}
\mathcal{E}\mathcal{N}\phi &= \phi, \\
\mathcal{N}\mathcal{E}\phi &=\phi.
\end{split}
\end{equation}
\item $\mathcal{E}\phi_{0}=0$, and for  
$n\ge1$, $\mathcal{E}\phi_{n}=\phi_{n}/n$.  Moreover, for $n\ge0$,  
$\mathcal{N}\phi_{n}=n\phi_{n}$.  In other words, 
$\mathcal{N}$ is the counting number operator for the 
Chebyshev basis $\{\phi_{n}\}_{n\ge0}$ of $L^{2}(\beta)$, and it 
can be canonically extended to a self-adjoint operator on $L^{2}(\beta)$,  
which when restricted to $L^{2}_{0}(\beta)$ has inverse $\mathcal{E}$.  
\item For any $C^{1}$ functions, $\phi$ and $\psi$, on $[-2,2]$,  
\begin{equation}\label{ep:21}
\langle \mathcal{N}\phi,\psi \rangle=2\iint
\frac{(\phi(x)-\phi(y))(\psi(x)-\psi(y))}{(x-y)^{2}} \, \omega(dx,dy),
\end{equation}
\noindent
and in particular, $\langle \mathcal{N}\phi,\psi \rangle=\langle \phi,\mathcal{N}\psi \rangle$.
\item If $V$ is a $C^{3}$ potential on $[-2,2]$ whose equilibrium 
measure $\mu_{V}$ has support $[-2,2]$, then 
\begin{equation}\label{ep:40}
d\mu_{V}=\left(1-\frac{1}{2}\mathcal{N}V\right)\,d\beta.
\end{equation}

\item The operator $\mathcal{M}$ is the counting number operator 
for the basis 
$(\psi_{n})_{n\ge 0}$ of $L^{2}(\alpha)$ and it 
has a natural extension as a self-adjoint operator on $L^{2}(\alpha)$.  
In other words, for any $n\ge0$, $\mathcal{M}\psi_{n}=n\psi_{n}$.  
In addition, for any $C^{1}$ function $\phi$ on $[-2,2]$, 
\begin{equation}\label{ep:20}
\langle \mathcal{M}\phi,\phi \rangle_{\alpha}
=\iint \left( \frac{\phi(x)-\phi(y)}{x-y}\right)^{2}\alpha(dx)\alpha(dy).
\end{equation}
\end{enumerate}
\end{proposition}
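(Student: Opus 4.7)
The plan is to reduce everything to the spectral identity in part~(3), and then derive the other parts by short functional-analytic arguments; the two spectral facts $\mathcal{E}\phi_n = \phi_n/n$ (with $\mathcal{E}\phi_0 = 0$) and $\mathcal{N}\phi_n = n\phi_n$ are the hard core. The first follows from the classical logarithmic-potential identity on $[-1,1]$ with arcsine weight, $\int_{-1}^{1}\log|v-u|T_n(u)(1-u^2)^{-1/2}du = -(\pi/n)T_n(v)$ for $n\geq 1$ (and $-\pi\log 2$ for $n=0$), after rescaling from $[-1,1]$ to $[-2,2]$; the constant $\log 2$ produced by the rescaling is absorbed when restricting to $L^2_0(\beta)$. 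The second is obtained by substituting $\phi_n' = (n/2)\psi_{n-1}$ (cf.\ \eqref{e0:7}) into the definition \eqref{e0:EN} of $\mathcal{N}$, expanding the divided difference inside the principal value via \eqref{ep:22}, and evaluating the residual scalar integrals $\int\psi_k\,d\beta$ and $\int y\psi_k(y)\,\beta(dy)$ in closed form (each is $1$ or $0$ according to the parity of $k$, read off the explicit formula $U_k(\cos\theta) = \sin((k+1)\theta)/\sin\theta$). With part~(3) in hand, part~(1) is immediate because $\mathcal{E}$ then acts on $L^2(\beta)$ with eigenvalues in $\{0\}\cup\{1/n\}_{n\geq 1}$, and $C^2$-regularity is a classical property of logarithmic potentials of H\"older densities; part~(2) follows since $\mathcal{E}$ and $\mathcal{N}$ are reciprocal on every vector of the Chebyshev basis of $L^2_0(\beta)$.

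Parts~(4) and~(6) are both symmetric sesquilinear identities, so by polarization and density it is enough to check each on pairs of basis elements. For~(4), take $\phi = \phi_m$ and $\psi = \phi_n$; expand $(\phi_n(x)-\phi_n(y))/(x-y)$ via \eqref{ep:22} together with $\phi_n' = (n/2)\psi_{n-1}$, unfold $\omega$ according to \eqref{e:omega}, and apply orthogonality of $\{\psi_k\}$ in $L^2(\alpha)$; the double integral collapses to $m\,\delta_{mn} = \langle\mathcal{N}\phi_m,\phi_n\rangle$, which matches part~(3). Part~(6) follows an identical route: a direct computation of $\mathcal{M}\psi_n$ using \eqref{ep:22} yields $\mathcal{M}\psi_n = n\psi_n$, after which polarization against $\psi_m$ produces the formula for $\langle\mathcal{M}\phi,\phi\rangle_\alpha$.

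For part~(5), I use the variational characterization \eqref{e0:4}: on $\mathrm{supp}(\mu_V) = [-2,2]$ one has $V(x) = 2\int\log|x-y|\mu_V(dy) + C$. Writing $\mu_V = f\,d\beta$, this rewrites as $\mathcal{E}f = (C-V)/2$. Decompose $f = 1 + g$ with $g = f-1 \in L^2_0(\beta)$ (forced by the normalization $\int f\,d\beta = 1$); since $\mathcal{E}(1)=0$ we get $\mathcal{E}f = \mathcal{E}g$, and applying $\mathcal{N}$ and using part~(2) together with $\mathcal{N}(\text{constant})=0$ yields $f-1 = -\tfrac{1}{2}\mathcal{N}V$, i.e., $f = 1-\tfrac{1}{2}\mathcal{N}V$. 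As a sanity check, $V(x)=x^2/2$ gives $\mathcal{N}V(x) = x^2-2$ and hence $f\,d\beta = \alpha$, recovering the semicircular equilibrium. The main technical obstacle in the whole proof is the identity $\mathcal{N}\phi_n = n\phi_n$ (and symmetrically $\mathcal{M}\psi_n = n\psi_n$), which demands careful bookkeeping of principal-value integrals against the combinatorial identity \eqref{ep:22}; everything else is a formal consequence.
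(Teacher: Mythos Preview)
Your proposal is correct, but it differs from the paper's treatment in two noteworthy ways.

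First, the paper does not give a self-contained proof of parts (1)--(5): it simply cites \cite[Proposition~1]{i7} for those and proves only part~(6) in the text. Your outline instead sketches direct arguments for all six parts, which is a genuine contribution if one wants the paper to be self-contained. Your argument for~(5) in particular (write $\mu_V=f\,d\beta$, use the variational equation to get $\mathcal{E}f=(C-V)/2$, then apply $\mathcal{N}$) is exactly the style of argument used in \cite{i7}.

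Second, and more interestingly, for part~(6) your approach and the paper's diverge on the quadratic-form identity \eqref{ep:20}. You propose to verify it by polarization and checking on basis pairs $(\psi_m,\psi_n)$ via \eqref{ep:22} and orthogonality, which is clean and algebraic. The paper instead proves \eqref{ep:20} directly for an arbitrary $C^2$ function by a symmetrization trick (the computation \eqref{e0:300}): one rewrites $\langle\mathcal{M}\phi,\phi\rangle_\alpha$ using the regularized form \eqref{ep:100}, symmetrizes the double integral in $x\leftrightarrow y$, and the boundary terms cancel against $\int x\phi'\phi\,d\alpha$ thanks to the Hilbert-transform identity \eqref{ep:101}. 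The paper's route has the virtue of establishing the Dirichlet-form identity pointwise for $C^2$ functions without invoking density of polynomials; yours is shorter but relies on that density. For the eigenvalue statement $\mathcal{M}\psi_n=n\psi_n$, both of you use \eqref{ep:22}, though the paper is more explicit: it passes through the rewriting \eqref{ep:100} and the Chebyshev recurrence \eqref{e0:7b}, whereas your phrase ``a direct computation via \eqref{ep:22}'' glosses over exactly that regularization step --- worth making explicit, since \eqref{ep:22} only handles the single divided difference, not the $(x-y)^{-2}$ kernel.
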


\begin{proof} Only the last part of this theorem is not covered in \cite[Proposition 1]{i7}.   
To prove it, we proceed as follows:  Take $\phi$ to be a $C^{2}$ function on $[-2,2]$ and note 
that the variational characterization \eqref{e0:5} gives
\begin{equation}\label{ep:101}
p.v.\int\frac{2}{x-y}\alpha(dy)=x.
\end{equation}
This can then be used to remove the singularity in the definition 
of $\mathcal{M}$ by observing that 
\begin{equation}\label{ep:100}
\begin{split}
(\mathcal{M}\phi)(x)&=2\int\frac{\phi(x)-\phi(y)
-\phi'(x)(x-y)}{(x-y)^{2}}\alpha(dy)+\phi'(x)p.v.\int\frac{2}{x-y}\alpha(dy)\\
&= -2\frac{d}{dx}\int\frac{\phi(x)-\phi(y)}{x-y}\alpha(dy)+\phi'(x)x.
\end{split}
\end{equation}

Next, to show that $\mathcal{M}$ is the counting number 
operator for $\psi_{n}$, notice that, from \eqref{ep:22} and from the 
orthogonality of $\psi_{n}$ with respect to the inner product 
associated with the measure $\alpha$, 
\[
\int \frac{\psi_{n}(x)-\psi_{n}(y)}{x-y}\alpha(dy)=\psi_{n-1}(x),
\]
which, in turn, using \eqref{e0:7b} leads to 
\[
\mathcal{M}\psi_{n}(x)=-2\psi_{n-1}^\prime(x)+x\psi_{n}^\prime(x)=n\psi_{n}(x).  
\]
In other words, $\mathcal{M}$ is the counting number operator for the 
orthonormal basis $\psi_{n}$ of $L^{2}(\alpha)$.  
Finally, to prove \eqref{ep:20}, use the first line of \eqref{ep:100} combined 
with \eqref{ep:101} to justify the following chain of equalities, satisfied 
by any $C^{2}$ function $\phi$ on $[-2,2]$:  
\begin{equation}\label{e0:300}
\begin{split}
\langle \mathcal{M}\phi,\phi\rangle_{\alpha} 
&=2\iint \frac{(\phi(x)-\phi(y))\phi(x)-\phi'(x)\phi(x)(x-y)}{(x-y)^{2}}\alpha(dy) \alpha(dx) 
+ \int x\phi'(x)\phi(x)\alpha(dx)\\   
& = \iint \frac{(\phi(x)-\phi(y))\phi(x)-\phi'(x)\phi(x)(x-y)}{(x-y)^{2}}\alpha(dy) \alpha(dx) \\
& \quad +\iint \frac{(\phi(y)-\phi(x))\phi(y)-\phi'(y)\phi(y)(y-x)}{(x-y)^{2}}\alpha(dx) \alpha(dy) \\
& \quad+ \int x\phi'(x)\phi(x)\alpha(dx) 
\\
& = \iint \frac{(\phi(x)-\phi(y))^{2}-(\phi'(x)\phi(x)-\phi'(y)\phi(y))(x-y)}{(x-y)^{2}}
\alpha(dy) \alpha(dx) + \int x\phi'(x)\phi(x)\alpha(dx) \\
&=\iint \left(\frac{\phi(x)-\phi(y)}{x-y}\right)^{2}\alpha(dx)\alpha(dy) 
- 2\int \phi'(x)\phi(x)\left(p.v.\int\frac{1}{x-y}\alpha(dy)\right) \alpha(dx) \\
& \quad +\int x\phi'(x)\phi(x)\alpha(dx) \\ 
& = \iint \left(\frac{\phi(x)-\phi(y)}{x-y}\right)^{2}\alpha(dx)\alpha(dy). 
\end{split}
\end{equation}
This equality for $C^{2}$ functions can be used in combination with 
standard results of the theory of Dirichlet forms \cite{MR1303354} to justify 
that $\mathcal{M}$ has a unique essentially self-adjoint extension.   
Moreover, standard approximation arguments prove that \eqref{ep:20} is valid for 
any $C^{1}$ function $\phi$.   
\qedhere

\end{proof}

Let us record separately the following important identity.    

\begin{theorem}\label{t:2} For any smooth function $\phi$ on $[-2,2]$, 
\begin{equation}\label{ep:9}
\langle\mathcal{N}\phi,\phi\rangle
=2\langle (\mathcal{M}+I)^{-1}\phi^\prime,\phi^\prime \rangle_{\alpha}.
\end{equation}
\end{theorem}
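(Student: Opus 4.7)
The plan is to prove the identity by expanding $\phi$ in the orthonormal basis that diagonalizes $\mathcal{N}$, then using the key relation $\phi_n' = (n/2)\psi_{n-1}$ from \eqref{e0:7} to translate the expansion of $\phi'$ into the orthonormal basis that diagonalizes $\mathcal{M}$, and finally comparing spectral expressions.

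More concretely, first I would let $\{\tilde{\phi}_n\}_{n\ge 0}$ with $\tilde{\phi}_0=\phi_0$ and $\tilde{\phi}_n=\sqrt{2}\phi_n$ for $n\ge 1$ denote the orthonormal Chebyshev basis of $L^2(\beta)$ (as noted in the preliminaries), and expand
\[
\phi=\sum_{n\ge 0}a_n \tilde{\phi}_n.
\]
Since $\mathcal{N}\phi_n=n\phi_n$ by Proposition~\ref{p:10}(3), we obtain immediately
\[
\langle \mathcal{N}\phi,\phi\rangle=\sum_{n\ge 1} n a_n^2.
\]

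Next, I would differentiate termwise and use \eqref{e0:7}, giving
\[
\phi'=\sum_{n\ge 1}a_n \sqrt{2}\,\phi_n'=\sum_{n\ge 1}\frac{n a_n}{\sqrt{2}}\,\psi_{n-1},
\]
which is precisely the expansion of $\phi'$ in the orthonormal basis $\{\psi_k\}_{k\ge 0}$ of $L^2(\alpha)$. Since $\mathcal{M}\psi_{n-1}=(n-1)\psi_{n-1}$ by Proposition~\ref{p:10}(6), the operator $(\mathcal{M}+I)^{-1}$ acts diagonally as $\psi_{n-1}\mapsto n^{-1}\psi_{n-1}$. Therefore
\[
2\langle(\mathcal{M}+I)^{-1}\phi',\phi'\rangle_{\alpha}
=2\sum_{n\ge 1}\frac{1}{n}\left(\frac{n a_n}{\sqrt{2}}\right)^{2}=\sum_{n\ge 1}n a_n^2,
\]
which matches $\langle \mathcal{N}\phi,\phi\rangle$ and proves \eqref{ep:9}.

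The proof is essentially bookkeeping on the normalization constants coming from the two different weights $\alpha$ and $\beta$; the only mild subtlety is to make sure the expansions converge in the appropriate Sobolev-type sense so that the term-by-term differentiation and the application of the (unbounded) operators $\mathcal{N}$ and $(\mathcal{M}+I)^{-1}$ are legitimate. For $\phi$ smooth on $[-2,2]$ this is standard: the Chebyshev coefficients $a_n$ decay faster than any polynomial, so both sums $\sum n a_n^2$ and the associated series for $\phi'$ converge absolutely and one can pass to the limit without difficulty. If needed, one may first verify the identity on the polynomial span $\{\tilde{\phi}_n\}$ where everything is a finite sum, and then extend by density and the closability of the forms involved.
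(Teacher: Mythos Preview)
Your proof is correct and follows essentially the same route as the paper: the paper polarizes and then checks the bilinear identity on basis elements $\phi_n$, $\phi_m$ using $\mathcal{N}\phi_n=n\phi_n$, $\phi_n'=(n/2)\psi_{n-1}$, and $(\mathcal{M}+I)^{-1}\psi_{n-1}=n^{-1}\psi_{n-1}$, which is exactly what your orthonormal expansion does coefficient-by-coefficient. The only cosmetic difference is that you track the normalization $\tilde\phi_n=\sqrt{2}\phi_n$ explicitly, while the paper absorbs this into the phrase ``by simple approximations, needs only to be verified for $\phi=\phi_n$ and $\psi=\phi_m$.''
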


\begin{proof}  By polarization, \eqref{ep:9} is equivalent to 
\[
\langle\mathcal{N}\phi,\psi\rangle
=2\langle (\mathcal{M}+I)^{-1}\phi^\prime,\psi^\prime \rangle_{\alpha},
\] 
which, by simple approximations, needs only to be verified for $\phi=\phi_{n}$ and 
$\psi=\phi_{m}$.  Now, since $\mathcal{N}\phi_{n}=n\phi_{n}$, \eqref{e0:7} combined 
with $(\mathcal{M}+I)^{-1}\psi_{n-1}=n^{-1}\psi_{n-1}$ and orthogonality, lead to the 
desired conclusion.

\end{proof}

Voiculescu in \cite{V5} introduced the non-commutative derivative  
$\partial:\C[X]\to \C[X]\otimes \C[X]$ which is given by 
\[
\partial 1=0,\quad \partial(X)=1\otimes 1, 
\quad\partial(m_{1}m_{2})=\partial(m_{1})(1\otimes m_{2})+(m_{1}\otimes 1)\partial(m_{2}).  
\]
Particularly useful, is the fact that the non-commutative derivative of 
$P=X^m$ can naturally be identifies as 
\[
\partial P=\frac{P(x)-P(y)}{x-y}.   
\]
For instance, it turns out that \eqref{ep:22} can be nicely rewritten in the form 
\begin{equation}\label{ep:23bis}
\partial \psi_{n}=\sum_{l=0}^{n-1}\psi_{l}\otimes \psi_{n-l-1}.  
\end{equation}

Let us now introduce the natural trace $\alpha$ on $\C[X]$ by 
\[
\alpha(P)=\int Pd\alpha
\] 
and denote by $\alpha^{\otimes k}$ its natural extension to $\C[X]^{\otimes k}$.  
Also introduce the trace $\omega$ on $\C[X]\otimes \C[X]$ through 
\[
\omega(P\otimes Q)=\int P(x)Q(y)\omega(dx ,dy).  
\]  
With these notations, \eqref{ep:21} can be translated into 
\[
\langle \mathcal{N}\phi, \psi \rangle=2\omega(\partial \phi\times \partial\psi).  
\]
In the language of Dirichlet forms this 
simply indicates that $\mathcal{N}$ is the generator of the Dirichlet 
form $\mathcal{D}(P,Q)=2\omega(\partial P\times \partial Q)$.  

In a similar vein, for the operator $\mathcal{M}$,  
\begin{equation}\label{ep:200}
\langle \mathcal{M}\phi, \psi \rangle_{\alpha}
=(\alpha\otimes\alpha)(\partial \phi\times \partial\psi). 
\end{equation}
Next, introducing the dual operator $\partial^{*}$ (see \cite{V5}) via 
\begin{equation}\label{ep:30}
\langle \partial^{*}(\phi\otimes \eta),\psi \rangle_{\alpha}
=\langle\phi\otimes \eta,  \partial \psi \rangle_{\alpha\otimes \alpha},
\end{equation}
a relation which has to be satisfied for $C^{1}$ functions $\phi,\psi,$ and $\eta$,  we see that 
\begin{equation}\label{ep:31}
\mathcal{M}=\partial^{*}\partial, 
\end{equation}
which certainly justifies naming $\mathcal{M}$ the free Ornstein--Uhlenbeck operator.  
Moreover, note that a nice and useful way of defining 
the operator $\partial^{*}$, in terms of the basis $(\psi_n)_{n\geq 0}$, 
is via
\begin{equation}\label{ep:32}
\partial^{*}(\psi_{a}\otimes \psi_{b})=\psi_{a+b+1} \text{ for } a,b\ge0.
\end{equation}

\section{An Interpolation Formula for the Semicircular Law}\label{s:3}

Let us start by recalling a classical interpolation result, e.g., see 
\cite{hps} and the references therein.  
\begin{proposition}\label{p:11}
Let $f,g:\R^{d}\to\R$ be smooth compactly supported functions, then 
\begin{equation}\label{e:1}
\E[f(X)g(X)]-\E[f(X)]\E[g(X)]
=\int_{0}^{1}\E[\langle\nabla f(\sqrt{1-s}X+\sqrt{s} Z),\Sigma \nabla g(\sqrt{1-s}Y+\sqrt{s} Z) \rangle]ds
\end{equation}
where $X,Y,Z$ are $d$-dimensional iid $N(0,\Sigma)$ random vectors.  
\end{proposition}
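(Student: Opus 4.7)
The plan is to use a standard Gaussian interpolation together with the integration-by-parts (Stein) identity. Define
\[
F(s)=\E\!\left[f\bigl(\sqrt{1-s}\,X+\sqrt{s}\,Z\bigr)\,g\bigl(\sqrt{1-s}\,Y+\sqrt{s}\,Z\bigr)\right],\qquad s\in[0,1].
\]
First I would check the boundary values. At $s=0$, $F(0)=\E[f(X)g(Y)]=\E[f(X)]\E[g(Y)]=\E[f(X)]\E[g(X)]$, since $X$ and $Y$ are independent and $Y\stackrel{d}{=}X$. At $s=1$, $F(1)=\E[f(Z)g(Z)]=\E[f(X)g(X)]$, since $Z\stackrel{d}{=}X$. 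Hence the left-hand side of \eqref{e:1} equals $\int_0^1 F'(s)\,ds$, and all that remains is to identify $F'(s)$ with the integrand on the right-hand side.

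Next I would differentiate under the expectation, writing $U=\sqrt{1-s}\,X+\sqrt{s}\,Z$ and $V=\sqrt{1-s}\,Y+\sqrt{s}\,Z$, to get
\[
F'(s)=\E\!\left[\bigl\langle\nabla f(U),-\tfrac{X}{2\sqrt{1-s}}+\tfrac{Z}{2\sqrt{s}}\bigr\rangle g(V)\right]+\E\!\left[f(U)\bigl\langle\nabla g(V),-\tfrac{Y}{2\sqrt{1-s}}+\tfrac{Z}{2\sqrt{s}}\bigr\rangle\right].
\]
Then I would apply the multivariate Stein identity $\E[W_i\,h(W)]=\sum_j\Sigma_{ij}\E[\partial_jh(W)]$ separately to the integrations in $X$, $Y$ and $Z$, conditioning on the others. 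For the first bracket, IBP in $X$ produces $\tfrac{\sqrt{1-s}}{2\sqrt{1-s}}\,\E[\Tr(\Sigma\nabla^2 f(U))\,g(V)]$, while IBP in $Z$ produces two pieces: one that matches and cancels the $X$-piece, and a remainder $\tfrac{1}{2}\E[\langle\nabla f(U),\Sigma\nabla g(V)\rangle]$ coming from the fact that $Z$ appears in both $U$ and $V$. The second bracket is treated symmetrically using IBP in $Y$ and $Z$, and yields an identical $\tfrac{1}{2}\E[\langle\nabla f(U),\Sigma\nabla g(V)\rangle]$ (by symmetry of $\Sigma$). Adding the two halves gives
\[
F'(s)=\E\!\left[\bigl\langle\nabla f(\sqrt{1-s}X+\sqrt{s}Z),\,\Sigma\,\nabla g(\sqrt{1-s}Y+\sqrt{s}Z)\bigr\rangle\right].
\]

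Integrating from $0$ to $1$ yields the claimed formula. The one step that requires care is the bookkeeping in the Stein integration by parts: the apparent singular coefficients $1/\sqrt{1-s}$ and $1/\sqrt{s}$ must cancel against the factors $\sqrt{1-s}$ and $\sqrt{s}$ pulled out by the chain rule so that the derivative $F'(s)$ is in fact bounded on $[0,1]$ (which also legitimates differentiation under the expectation for compactly supported smooth $f,g$). The only other point to verify is that the Stein identity applies simultaneously in $X$, $Y$, $Z$: this is immediate since $(X,Y,Z)$ is Gaussian with block-diagonal covariance, so conditioning on two of them leaves the third Gaussian with covariance $\Sigma$.
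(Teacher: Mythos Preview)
Your argument is correct: the interpolation $F(s)$ has the right endpoints, and the Stein integration by parts in $X$, $Y$, $Z$ produces exactly the claimed derivative, with the second-order terms cancelling and the cross term from $Z$ surviving. (A tiny sign slip: your ``IBP in $X$'' term should carry the minus sign from $-X/(2\sqrt{1-s})$; this is what makes it cancel against the corresponding piece from $Z$, as you say.)

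As for comparison: the paper does not prove Proposition~\ref{p:11} at all. It is stated as a known classical interpolation identity, with a pointer to \cite{hps} and the references therein, and then used as input for the free analogue in Proposition~\ref{p:1}. Your Stein/IBP derivation is precisely the standard route to this identity, so there is nothing to contrast.
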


Replacing $X_{n},Y_{n},Z_{n}$ by iid $GUE(\frac{1}{n})$ ensembles and 
taking $f(A)=g(A)=\Tr_{n}\phi(A)$ the above yields 
\[
\Var(\Tr_{n}\phi(X_{n}))
=\int_{0}^{1}\E\left[\frac{1}{n}\Tr_{n}\left(\phi'(\sqrt{1-s}X_{n}
+\sqrt{s} Z_{n})\phi'(\sqrt{1-s}Y_{n}+\sqrt{s} Z_{n})\right)\right]ds.
\]
Upon taking the limit, as $n\to\infty$, and using fluctuation results for 
random matrices \cite{J,Pastur} or \eqref{ep:23} combined with the general result 
of Voiculescu on freeness \cite{MR1217253}, 
lead to the following formal result.   

\begin{proposition}\label{p:1}  Let $\phi:[-2,2]\to\R$ be a smooth function.   
Then,  
\begin{equation}\label{e1:2}
\iint\left( \frac{\phi(x)-\phi(y)}{x-y}\right)^{2}\omega(dx\,dy)
=\int_{0}^{1}\tau(\phi^\prime(\sqrt{1-s}\mathbf{x}+\sqrt{s} \mathbf{z})\phi^\prime(\sqrt{1-s}\mathbf{y}
+\sqrt{s} \mathbf{z}))ds, 
\end{equation}
where $\mathbf{x},\mathbf{y}$, and $\mathbf{z}$ are free semicircular random variables 
on some non-commutative probability space $(\mathcal{A},\tau)$.   
\end{proposition}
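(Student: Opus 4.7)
The plan is to deduce \eqref{e1:2} from Proposition~\ref{p:11} applied to three i.i.d.\ GUE matrices and then pass to the $n\to\infty$ limit on both sides. The left-hand side of \eqref{e1:2} is, by \eqref{ep:23}, the limit of $\Var(\Tr_n\phi(X_n))$ for the GUE$(1/\sqrt n)$ ensemble. The right-hand side, morally, is what one obtains by (i) identifying the Euclidean gradient of $A\mapsto \Tr_n\phi(A)$ as $\phi'(A)$ and (ii) invoking Voiculescu's asymptotic freeness theorem to replace independent GUE matrices by free semicircular variables in the $*$-distributional limit.

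Concretely, I would proceed in four steps. First, take $X_n,Y_n,Z_n$ to be i.i.d.\ GUE$(1/\sqrt n)$ matrices and set $f(A)=g(A)=\Tr_n\phi(A)$; then the Frobenius gradient is
\[
\nabla (\Tr_n\phi)(A)=\phi'(A),
\]
and substituting into \eqref{e:1} (the covariance factor of the GUE$(1/\sqrt n)$ ensemble being, up to trace normalisation, the scalar $1/n$) Proposition~\ref{p:11} yields
\[
\Var(\Tr_n\phi(X_n))=\int_0^1\E\!\left[\tfrac{1}{n}\Tr_n\!\bigl(\phi'(\sqrt{1-s}X_n+\sqrt{s}Z_n)\,\phi'(\sqrt{1-s}Y_n+\sqrt{s}Z_n)\bigr)\right]ds.
\]
Second, by \eqref{ep:23}, the LHS converges to $\iint\bigl(\frac{\phi(x)-\phi(y)}{x-y}\bigr)^2\omega(dx,dy)$. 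Third, for each fixed $s\in[0,1]$, apply asymptotic freeness: $(X_n,Y_n,Z_n)$ converges in $*$-distribution to a free standard semicircular triple $(\mathbf{x},\mathbf{y},\mathbf{z})$, hence the pair $\bigl(\sqrt{1-s}X_n+\sqrt{s}Z_n,\sqrt{1-s}Y_n+\sqrt{s}Z_n\bigr)$ converges jointly to $\bigl(\sqrt{1-s}\mathbf{x}+\sqrt{s}\mathbf{z},\sqrt{1-s}\mathbf{y}+\sqrt{s}\mathbf{z}\bigr)$, giving pointwise convergence of the integrand in $s$ to $\tau\!\bigl(\phi'(\sqrt{1-s}\mathbf{x}+\sqrt{s}\mathbf{z})\,\phi'(\sqrt{1-s}\mathbf{y}+\sqrt{s}\mathbf{z})\bigr)$. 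Fourth, exchange the limit and the $s$-integral by dominated convergence.

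The main technical difficulty is in steps three and four. Asymptotic freeness, as stated, only delivers convergence of mixed $\frac1n\Tr_n$-moments of \emph{polynomials} in $X_n,Y_n,Z_n$; to upgrade this to the smooth functional calculus $\phi'$, one combines Weierstrass polynomial approximation on a fixed compact interval with the Bai--Yin operator-norm bound for GUE, which confines the spectra of $\sqrt{1-s}X_n+\sqrt{s}Z_n$ and $\sqrt{1-s}Y_n+\sqrt{s}Z_n$ to a neighbourhood of $[-2,2]$ with overwhelming probability and uniformly in $s\in[0,1]$. The same bound provides an $s$- and $n$-uniform majorant for the integrand, justifying dominated convergence. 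An appealing purely algebraic alternative, which avoids random matrices altogether, is to expand $\phi$ in the Chebyshev basis $\{\phi_n\}_{n\geq 0}$: the LHS diagonalises by \eqref{ep:21} and \eqref{e0:7}, while the RHS can be computed explicitly using Proposition~\ref{p:1SS}, since $\sqrt{1-s}\mathbf{x}+\sqrt{s}\mathbf{z}$ and $\sqrt{1-s}\mathbf{y}+\sqrt{s}\mathbf{z}$ form a semicircular system whose $2\times2$ covariance matrix has diagonal entries $1$ and off-diagonal entry $s$; matching the resulting series term-by-term then yields \eqref{e1:2} without any limiting argument.
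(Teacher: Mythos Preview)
Your random-matrix argument is correct and is precisely the route the paper sketches in the paragraph immediately preceding the proposition (which it refers to as ``a proof\dots through random matrix manipulations''). Inside the proof, however, the paper supplies a different, intrinsically free-probabilistic argument that avoids limits altogether. It observes, as you do in your alternative sketch, that $(\sqrt{1-s}\,\mathbf{x}+\sqrt{s}\,\mathbf{z},\,\sqrt{1-s}\,\mathbf{y}+\sqrt{s}\,\mathbf{z})$ is a semicircular pair with covariance $\bigl(\begin{smallmatrix}1&s\\ s&1\end{smallmatrix}\bigr)$, but then uses Proposition~\ref{p:1SS} not to expand term by term, but to replace this pair by the \emph{asymmetric} pair $(\sqrt{1-s^{2}}\,\mathbf{x}+s\,\mathbf{z},\,\mathbf{z})$ having the same covariance. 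After the change of variable $s=e^{-t}$ the integrand becomes $\langle P_{t}\phi',\phi'\rangle_{\alpha}$ for a semigroup $(P_{t})_{t\ge0}$; its generator is computed directly on monomials and identified with $-\mathcal{M}$, so that integrating in $t$ gives $\langle(\mathcal{M}+I)^{-1}\phi',\phi'\rangle_{\alpha}$. Theorem~\ref{t:2} then matches this with $\tfrac{1}{2}\langle\mathcal{N}\phi,\phi\rangle$, which is the left-hand side by \eqref{ep:21}. Your approach makes the classical-to-free analogy completely transparent, at the cost of the technical baggage (operator-norm bounds, polynomial approximation, dominated convergence) you rightly flag; the paper's approach sidesteps all of that and, as a dividend, identifies $P_{t}$ as the free Ornstein--Uhlenbeck semigroup --- structural information that neither your limiting argument nor your term-by-term Chebyshev alternative would reveal directly.
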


\begin{proof} A proof of \eqref{e1:2} has already been given through random matrix 
manipulations.  However, here is a different and more direct approach:  
From \eqref{ep:21}, the left-hand side of \eqref{e1:2} is:   
\begin{equation}\label{e1:21}
\iint\left( \frac{\phi(x)-\phi(y)}{x-y}\right)^{2}\omega(dx,dy)=\frac{1}{2}
\langle \mathcal{N}\phi,\psi \rangle.  
\end{equation}
To deal with the right-hand side of \eqref{e1:2}, start by observing that 
for a fixed $s\in [0,1]$, the pair $(\sqrt{1-s}\mathbf{x}+\sqrt{s} \mathbf{z},
\sqrt{1-s}\mathbf{y}+\sqrt{s} \mathbf{z})$ is a semicircular system 
as introduced in Section~\ref{s:SS}.  
The covariance matrix of $(\sqrt{1-s}\mathbf{x}+\sqrt{s} \mathbf{z},\sqrt{1-s}\mathbf{y}+\sqrt{s} 
\mathbf{z})$ is easy to compute and is equal to:   
\[
\met{ 1 & s \\ s & 1}.
\]
Since according to Proposition~\ref{p:1SS}, the mixed moments do not depend 
on the particular realization of the semicircular system as long as 
the covariance matrix is the same, 
a different system, namely t$(\sqrt{1-s^{2}}\mathbf{x}+s\mathbf{z},\mathbf{z})$ 
can be chosen.  With this choice,  
\[
\tau(\Phi(\sqrt{1-s}\mathbf{x}+\sqrt{s} \mathbf{z}, \sqrt{1-s}\mathbf{y}+\sqrt{s} \mathbf{z}))
=\tau(\Phi(\sqrt{1-s^{2}}\mathbf{x}+s\mathbf{z},\mathbf{z})),
\]
for any non-commutative polynomial $\Phi$ in two variables.  
In particular, for any smooth 
functions $\phi$ and $\psi$ on $[-2,2]$, it follows that  
\[
\tau(\phi'(\sqrt{1-s}\mathbf{x}+\sqrt{s} \mathbf{z})\psi'(\sqrt{1-s}
\mathbf{y}+\sqrt{s} \mathbf{z})))
=\tau(\phi'(\sqrt{1-s^{2}}\mathbf{x}+s\mathbf{z})\psi'(\mathbf{z})).
\]
From this last fact, combined with the change of variable $s=e^{-t}$, the right-hand side 
of \eqref{e1:2} becomes 
\begin{equation}\label{e1:3}
\int_{0}^{1}\tau(\phi'(\sqrt{1-s^{2}}\mathbf{x}+s\mathbf{z})\psi'(\mathbf{z}))ds
=\int_{0}^{\infty}e^{-t}\tau(\phi'(\sqrt{1-e^{-2t}}\mathbf{x}+e^{-t} 
\mathbf{z})\psi'(\mathbf{z}))dt.  
\end{equation}
Next, define the operator $P_{t}$ via 
\[
\langle P_{t}\phi,\psi \rangle_{\alpha}=\tau(\phi(\sqrt{1-e^{-2t}}\mathbf{x}+e^{-t} 
\mathbf{z})\psi(\mathbf{z})).
\]
From the covariance structure of semicircular systems pointed above, it is easy to 
check that $(P_{t})_{t\ge 0}$ form a semigroup of bounded selfadjoint operators.  
Denote by $-\mathcal{A}$ its generator, which we now plan to identify.  
To this end, take $\phi(x)=x^{a}$ and $\psi(x)=x^{b}$, with $a,b\ge0$ integers, and 
compute  
\[
\langle\mathcal{A}x^{a},x^{b} \rangle
=\frac{d}{dt}\bigg|_{t=0}\tau\left((\mathbf{z}
+\sqrt{2t}\mathbf{x}-t\mathbf{z})^{a}\mathbf{z}^{b}\right) 
= 2\sum_{l_{1}+l_{2}+l_{3}=a-2}\tau(\mathbf{z}^{l_{1}}
\mathbf{x}\mathbf{z}^{l_{2}}\mathbf{x}\mathbf{z}^{l_{3}}\mathbf{z}^{b})
-a\tau(\mathbf{z}^{a}\mathbf{z}^{b}).  
\]
Using the freeness of $\mathbf{x}$  and $\mathbf{z}$, continue with  
\[
2\sum_{l_{1}+l_{2}+l_{3}
=a-2}\tau(\mathbf{z}^{l_{2}})\tau(\mathbf{z}^{l_{1}+l_{3}+b})
-a\tau(\mathbf{z}^{a}\mathbf{z}^{b})=2\sum_{l=0}^{a-2}(l+1)\tau(\mathbf{z}^{l+b})
\tau(\mathbf{z}^{a-2-l})-a\tau(\mathbf{z}^{a+b}),
\]
and thus, since $\mathbf{z}$ is semicircular under $\tau$, arrive at
\begin{equation}\label{e1:5}
-\mathcal{A}\phi=2D(\mathrm{I}\otimes\alpha)(\partial \phi)-x\phi',
\end{equation}
where the operator $D$ is the derivative operator.   

Taking this last identity on functions $\phi=\psi_{n}$, 
combined with \eqref{ep:23bis} and the fact 
that the sequence $\{\psi_{n}\}_{n\ge0}$ is orthogonal with respect to inner product 
associated to $\alpha$, as well as \eqref{e0:7b}  
lead to $\mathcal{A}\psi_{n}=n\psi_{n}$, which shows that $\mathcal{A}=\mathcal{M}$.   
Hence, for smooth functions $\phi$ on $[-2,2]$, the right-hand side 
of \eqref{e1:2} can now be written as
\[\begin{split}
\int_{0}^{\infty}e^{-t}\tau(\phi'(\sqrt{1-e^{-2t}}\mathbf{x}+e^{-t} 
\mathbf{z})\phi'(\mathbf{z}))dt
&=\int_{0}^{\infty}e^{-t}\langle e^{-t\mathcal{M}}\phi',\phi' \rangle_{\alpha}dt\\
&=\int_{0}^{\infty}\langle e^{-t(\mathcal{M}+I)}\phi',\phi'\rangle_{\alpha} dt 
= \langle (\mathcal{M}+I)^{-1}\phi',\phi' \rangle_{\alpha}.
\end{split}
\]

To conclude, the left-hand side of \eqref{e1:2} is 
$\frac{1}{2}\langle\mathcal{N}\phi,\phi \rangle$ 
while its right-hand side is $\langle (\mathcal{M}+I)^{-1}\phi',\phi' \rangle_{\alpha}$, 
and therefore the remaining of the statement follows from Theorem~\ref{t:2}.     

Note that $(P_{t})_{t\ge 0}$ is nothing but the free Ornstein-Uhlenbeck semigroup 
first introduced in \cite{MR1851716}.  \qedhere
\end{proof}

We can now state the following consequence:     

\begin{corollary}[The Free Poincar\'e Inequality]\label{c:5}  
For any smooth function $\phi:[-2,2]\to\R$,  
\begin{equation}\label{efp}
 \int_{-2}^{2}\int_{-2}^{2}\left( \frac{\phi(x)-\phi(y)}{x-y}\right)^{2}
 \frac{(4-xy)}{4\pi^{2}\sqrt{(4-x^{2})(4-y^{2})}}dx\,dy\le \int_{-2}^{2} \phi^\prime(x)^{2}\,\alpha(dx).  
\end{equation}
Equality is only attained for linear functions $\phi$.   
\end{corollary}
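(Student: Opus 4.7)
The plan is to combine the identity from Theorem~\ref{t:2} with the spectral analysis of the free Ornstein--Uhlenbeck operator $\mathcal{M}$. By equation \eqref{ep:21} with $\psi = \phi$, the left-hand side of \eqref{efp} equals $\tfrac{1}{2}\langle\mathcal{N}\phi,\phi\rangle$. Applying Theorem~\ref{t:2} immediately rewrites this as
\[
\iint\!\!\left(\frac{\phi(x)-\phi(y)}{x-y}\right)^{\!2}\!\omega(dx,dy)
=\langle (\mathcal{M}+I)^{-1}\phi^\prime,\phi^\prime\rangle_{\alpha}.
\]
So the whole inequality reduces to proving that $(\mathcal{M}+I)^{-1}\le I$ as an operator on $L^{2}(\alpha)$.

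This last step is where the analysis of $\mathcal{M}$ pays off. Part (6) of Proposition~\ref{p:10} tells us that $\mathcal{M}$ is the counting number operator for the orthonormal basis $(\psi_{n})_{n\ge 0}$ of $L^{2}(\alpha)$ with $\mathcal{M}\psi_{n}=n\psi_{n}$. In particular $\mathcal{M}$ is a non-negative self-adjoint operator, so its resolvent $(\mathcal{M}+I)^{-1}$ is bounded above by $I$. Expanding $\phi^\prime=\sum_{n\ge 0}a_{n}\psi_{n}$ in the $\psi_n$-basis and applying Parseval gives
\[
\langle (\mathcal{M}+I)^{-1}\phi^\prime,\phi^\prime\rangle_{\alpha}
=\sum_{n\ge 0}\frac{a_{n}^{2}}{n+1}
\le \sum_{n\ge 0}a_{n}^{2}
=\|\phi^\prime\|_{\alpha}^{2},
\]
which is exactly the desired bound.

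The equality case then reads off the same series: we have equality iff $a_n = 0$ for every $n\ge 1$, i.e.\ iff $\phi^\prime$ lies in the one-dimensional eigenspace of $\mathcal{M}$ associated with the eigenvalue $0$, which is the span of $\psi_{0}=1$. Equivalently $\phi^\prime$ is a constant, and so $\phi$ is affine. (The constant term in $\phi$ does not contribute to either side, so we describe this as ``linear.'') There is no genuine obstacle here; the only subtlety worth noting is the domain/density issue, which is already handled by the standard Dirichlet-form extension argument used at the end of the proof of Proposition~\ref{p:10} to justify formula \eqref{ep:20} for $C^{1}$ functions $\phi$, so the computation applies verbatim to any smooth $\phi$ on $[-2,2]$.
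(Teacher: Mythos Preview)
Your argument is correct and coincides with the \emph{second} proof the paper records for this corollary: rewrite the left-hand side as $\tfrac{1}{2}\langle\mathcal{N}\phi,\phi\rangle$ via \eqref{ep:21}, apply Theorem~\ref{t:2} to get $\langle(\mathcal{M}+I)^{-1}\phi',\phi'\rangle_{\alpha}$, and conclude from the non-negativity of $\mathcal{M}$; equality forces $\phi'\in\ker\mathcal{M}$, hence $\phi$ linear.

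The paper, however, places this corollary right after the interpolation formula of Proposition~\ref{p:1} and gives as its \emph{primary} proof a different argument: apply Cauchy--Schwarz to the integrand $\tau(\phi'(\sqrt{1-s}\mathbf{x}+\sqrt{s}\mathbf{z})\phi'(\sqrt{1-s}\mathbf{y}+\sqrt{s}\mathbf{z}))$ in \eqref{e1:2}, and use that each of $\sqrt{1-s}\mathbf{x}+\sqrt{s}\mathbf{z}$ and $\sqrt{1-s}\mathbf{y}+\sqrt{s}\mathbf{z}$ is a standard semicircular element, so both factors equal $\|\phi'\|_{\alpha}$. This route exploits the free interpolation identity and mirrors the classical covariance-interpolation proof of Poincar\'e; your route bypasses the free-probability machinery entirely and works purely at the level of the spectral decomposition of $\mathcal{M}$. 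Both are valid, and the paper explicitly mentions your approach as an alternative in its final paragraph.
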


\begin{proof}  From the Cauchy-Schwarz inequality, 
\[
\tau(\phi'(\sqrt{1-s}\mathbf{x}+\sqrt{s} \mathbf{z})\phi'(\sqrt{1-s}\mathbf{y}
+\sqrt{s} \mathbf{z}))\le (\tau(\phi^\prime(\sqrt{1-s}\mathbf{x}
+\sqrt{s} \mathbf{z})^{2})^{1/2}\tau((\phi^\prime(\sqrt{1-s}\mathbf{y}
+\sqrt{s} \mathbf{z}))^{2})^{1/2}.
\]
Both, $\sqrt{1-s}\mathbf{x}+\sqrt{s} \mathbf{z}$ 
and $\sqrt{1-s}\mathbf{y}+\sqrt{s} \mathbf{z}$ are semicircular elements 
of variance $1$, thus, for any $s\in [0,1]$, 
\[
\tau(\phi'(\sqrt{1-s}\mathbf{x}+s\mathbf{z})\phi'(\sqrt{1-s}\mathbf{y}
+s \mathbf{z}))\le \int\phi'(x)^{2}\alpha(dx),
\]
which combined with \eqref{e1:2} finishes the proof.

A different proof of \eqref{efp}, follows directly from \eqref{ep:21}, 
Theorem~\ref{t:2} and the fact that $\mathcal{M}$ is non-negative.  
Equality is easily seen to be attained when $\phi'$ is in the kernel of 
$\mathcal{M}$, meaning that $\phi'$ is a constant function, in other words, 
$\phi$ must be a linear.    
\end{proof}

\section{First Refinement}\label{S:1dim}

We now wish to extend the operator $\mathcal{M}$ to tensors.  
To do so, let $\C[X]^{\otimes (k)}:=\C[X]\otimes \C[X]\otimes \dots \otimes \C[X]$ 
where the tensor product is taken $k$ times.   
The non-commutative derivative $\partial:\C[X]\to \C[X]\otimes \C[X]$,  
as it appears in \cite{V5}, is given by
\[
\partial 1=0,\quad \partial(X)=1\otimes 1, \quad\partial(m_{1}m_{2})
=\partial(m_{1})(1\otimes m_{2})+(m_{1}\otimes 1)\partial(m_{2}).  
\]
On monomials $X^a$, $a\ge 1$, this becomes  
\[
\partial(X^{a})=\sum_{p+q=a-1}X^{p}\otimes X^{q}
\]
which is clearly equal to zero for $a=0$.  
The higher derivatives $\partial^{(k)}:\C[X]\to \C[X]^{\otimes (k)}$ are defined 
inductively by $\partial^{(k)}=(\partial\otimes I^{k-1})\partial^{(k-1)}$, and 
it is easy to check that for $0\le k\le n$ and $0\le p$,  
\begin{equation}\label{e:3:1}
\partial^{(p+n)}=(I^{\otimes k}\otimes \partial^{(p)}\otimes I^{\otimes (n-k)})\partial^{(n)}.
\end{equation}

Now, extend the operator $\mathcal{M}$ on $\C[X]$ to 
an operator $\mathcal{M}^{(k)}$ on $\C[X]^{\otimes (k)}$ via:   
\[
\mathcal{M}^{(k)}(P_{1}\otimes P_{2}\dots \otimes P_{k})=(\mathcal{M}P_{1})\otimes P_{2}\dots 
\otimes P_{k}+P_{1}\otimes (\mathcal{M}P_{2})\dots 
\otimes P_{k}+P_{1}\otimes P_{2}\dots \otimes (\mathcal{M}P_{k}). 
\]
Equivalently, this is characterized by 
\begin{equation}\label{e:3:3}
\mathcal{M}^{(a+b)}=\mathcal{M}^{(a)}\otimes I^{\otimes(b)}+I^{\otimes(a)}\otimes \mathcal{M}^{(b)}, 
\end{equation}
for any $a,b\ge0$, with also $\mathcal{M}^{(0)}=I$ and $\mathcal{M}^{(1)}=\mathcal{M}$.    

The following are important properties verified by the operators defined thus far.  

\begin{proposition}\label{p:2}  For any $k\ge1$,  
\begin{equation}\label{e:3:2}
(\partial\otimes I^{\otimes (k-1)})\mathcal{M}^{(k)}=(\mathcal{M}^{(k+1)}+I)
(\partial\otimes I^{\otimes (k-1)})
\end{equation}
while for any polynomials $\phi,\psi\in \C[X]$,
\begin{equation}\label{e:3:30}
\langle \mathcal{M}^{(k)}\partial^{(k-1)}\phi,\partial^{(k-1)}\psi\rangle_{\alpha^{\otimes(k)}}
=k\langle \partial^{(k)}\phi,\partial^{(k)}\psi \rangle_{\alpha^{\otimes (k+1)}}.
\end{equation}
In particular, $\mathcal{M}^{(k)}\partial^{(k-1)}\phi=0$ if and only if 
$\phi$ is a polynomial of degree $k-1$.  

In addition to verifying these properties, the operator $\mathcal{M}^{(k)}$ is essentially 
self-adjoint and non-negative on $L^{2}(\alpha^{\otimes k})$, and for any $a>k-1$, 
\begin{equation}\label{e:3:100}
(\mathcal{M}^{(k)}+a I)^{-1}\partial^{(k-1)} = \partial^{(k-1)}(\mathcal{M}+(1+a-k)I)^{-1}.
\end{equation}
\end{proposition}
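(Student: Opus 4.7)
The plan is to establish the commutation relation \eqref{e:3:2} first, then to leverage it together with the slot-symmetry of iterated derivatives (encoded in \eqref{e:3:1}) to obtain the energy identity \eqref{e:3:30}, and finally to read off the kernel statement, the essential self-adjointness and non-negativity, and \eqref{e:3:100} as consequences.

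For \eqref{e:3:2}, I would verify the case $k=1$ directly on the orthonormal basis $\{\psi_n\}$ of $L^2(\alpha)$: using $\mathcal{M}\psi_n=n\psi_n$ together with $\partial\psi_n=\sum_{l=0}^{n-1}\psi_l\otimes\psi_{n-1-l}$ from \eqref{ep:23bis}, both $\partial\mathcal{M}\psi_n$ and $(\mathcal{M}^{(2)}+I)\partial\psi_n$ evaluate to $n\,\partial\psi_n$, since the right-hand side acts on each summand $\psi_l\otimes\psi_{n-1-l}$ by multiplication by $l+(n-1-l)+1=n$. For general $k$, I would decompose $\mathcal{M}^{(k)}=\mathcal{M}\otimes I^{\otimes(k-1)}+I\otimes\mathcal{M}^{(k-1)}$ via \eqref{e:3:3}: since $\partial\otimes I^{\otimes(k-1)}$ only touches the first tensor slot, the first summand contributes the extra $+I$ coming from the $k=1$ case, while the second summand commutes through cleanly, and the pieces reassemble into $\mathcal{M}^{(k+1)}+I$.

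For \eqref{e:3:30} the decisive observation is that \eqref{e:3:1} with $p=1$ and $n=k-1$ yields $\partial^{(k)}=(I^{\otimes j}\otimes\partial\otimes I^{\otimes(k-1-j)})\partial^{(k-1)}$ for \emph{every} $0\le j\le k-1$. Combining each of these $k$ equivalent representations with $\partial^{*}\partial=\mathcal{M}$ from \eqref{ep:31} (the adjoint taken slot-wise) gives
\[
\langle\partial^{(k)}\phi,\partial^{(k)}\psi\rangle_{\alpha^{\otimes(k+1)}}=\langle(I^{\otimes j}\otimes\mathcal{M}\otimes I^{\otimes(k-1-j)})\partial^{(k-1)}\phi,\partial^{(k-1)}\psi\rangle_{\alpha^{\otimes k}}.
\]
Summing these $k$ equal expressions and using \eqref{e:3:3} to recognise $\sum_{j=0}^{k-1}I^{\otimes j}\otimes\mathcal{M}\otimes I^{\otimes(k-1-j)}=\mathcal{M}^{(k)}$ produces \eqref{e:3:30}. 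The kernel claim is then immediate: $\mathcal{M}^{(k)}\partial^{(k-1)}\phi=0$ iff $\|\partial^{(k)}\phi\|_{\alpha^{\otimes(k+1)}}^2=0$ iff $\phi$ has degree at most $k-1$. Essential self-adjointness and non-negativity of $\mathcal{M}^{(k)}$ on $L^2(\alpha^{\otimes k})$ then follow from the diagonalisation on the orthonormal tensor basis $\{\psi_{n_1}\otimes\cdots\otimes\psi_{n_k}\}$, whose eigenvalues $n_1+\cdots+n_k$ are non-negative.

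Finally for \eqref{e:3:100}, I would iterate \eqref{e:3:2} along the chain that defines $\partial^{(k-1)}=(\partial\otimes I^{\otimes(k-2)})\cdots(\partial\otimes I)\partial$, picking up one $+I$ at each of the $k-1$ applications, to reach
\[
\partial^{(k-1)}\mathcal{M}=(\mathcal{M}^{(k)}+(k-1)I)\partial^{(k-1)},
\]
or equivalently $\partial^{(k-1)}(\mathcal{M}+(1+a-k)I)=(\mathcal{M}^{(k)}+aI)\partial^{(k-1)}$. The hypothesis $a>k-1$ makes both operators strictly positive (smallest eigenvalues $1+a-k>0$ and $a>0$ respectively), so inverting on both sides gives \eqref{e:3:100}. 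The main obstacle I expect is pinning down cleanly the slot-symmetrisation step that simultaneously produces the operator $\mathcal{M}^{(k)}$ and the factor $k$ in \eqref{e:3:30}; once that symmetry is exploited, the remaining work reduces to the single algebraic commutation \eqref{e:3:2} in the $k=1$ case plus its iteration.
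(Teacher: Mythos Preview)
Your argument is correct, and for \eqref{e:3:2} and \eqref{e:3:100} it coincides with the paper's, modulo minor packaging: the paper phrases \eqref{e:3:2} as an induction using the splitting $\mathcal{M}^{(k)}=\mathcal{M}^{(k-1)}\otimes I+I^{\otimes(k-1)}\otimes\mathcal{M}$, whereas your splitting $\mathcal{M}^{(k)}=\mathcal{M}\otimes I^{\otimes(k-1)}+I\otimes\mathcal{M}^{(k-1)}$ lets you reduce directly to the $k=1$ case without an inductive hypothesis. Both arrive at the intertwining $\partial^{(k-1)}\mathcal{M}=(\mathcal{M}^{(k)}+(k-1)I)\partial^{(k-1)}$ (the paper's \eqref{e:3:201}) and invert it the same way for \eqref{e:3:100}.

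The proof of \eqref{e:3:30} is where you take a genuinely different route. The paper first applies \eqref{e:3:201} to rewrite the left side as $\langle\partial^{(k-1)}(\mathcal{M}-(k-1)I)\phi,\partial^{(k-1)}\psi\rangle_{\alpha^{\otimes k}}$, then verifies the resulting identity on basis elements $\psi_l,\psi_{l'}$ by computing $\partial^{(k)}\psi_l=\sum_{a_1+\cdots+a_{k+1}=l-k}\psi_{a_1}\otimes\cdots\otimes\psi_{a_{k+1}}$ explicitly and invoking the combinatorial identity $(l-k+1)\binom{l}{k-1}=k\binom{l}{k}$. Your argument instead exploits the slot-symmetry \eqref{e:3:1} to write $\partial^{(k)}=(I^{\otimes j}\otimes\partial\otimes I^{\otimes(k-1-j)})\partial^{(k-1)}$ for each of the $k$ possible $j$, moves one $\partial$ across via $\partial^{*}\partial=\mathcal{M}$, and sums to assemble $\mathcal{M}^{(k)}$. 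This is more conceptual and basis-free: it explains \emph{why} the factor $k$ appears (it is the number of slots in which one can insert the extra $\partial$), whereas the paper's proof recovers the same factor through a binomial count. Conversely, the paper's computation yields the explicit expansion \eqref{e:3:50} of $\partial^{(k)}\psi_l$ as a by-product, which is useful elsewhere; your approach does not produce that formula but does not need it either.
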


\begin{proof}  The proof of \eqref{e:3:2} is done by induction.  
For $k=1$, we need to prove that 
\[
\partial \mathcal{M}=(\mathcal{M}^{(2)}+I)\partial,
\]
and this is going to be verified on polynomials $\psi_{l}$.  
Since 
\begin{equation}\label{tag}
\partial \psi_{l}=\sum_{a+b=l-1}\psi_{a}\otimes \psi_{b}, 
\end{equation}
and since $\mathcal{M}$ is the counting number operator,  
\[
\partial (\mathcal{M}\psi_{l})=l\sum_{a+b=l-1}\psi_{a}\otimes \psi_{b}, \text{ while }\hskip 0.2cm  
\mathcal{M}^{(2)}\partial\psi_{l}=(l-1)\sum_{a+b=l-1}\psi_{a}\otimes \psi_{b},
\]
which is exactly the case $k=1$.  

Now assume $k\ge2$ and use \eqref{e:3:3} to write 
\begin{equation}\label{tagtag}
(\partial\otimes I^{\otimes (k-1)})\mathcal{M}^{(k)}=(\partial\otimes I^{\otimes (k-1)}) 
(\mathcal{M}^{(k-1)}\otimes I)+(\partial\otimes I^{\otimes (k-1)}) 
(I^{\otimes (k-1)}\otimes\mathcal{M}).
\end{equation}
Now, the induction step justifies that 
\[
\begin{split}
(\partial\otimes I^{\otimes (k-1)})(\mathcal{M}^{(k-1)}\otimes I)
&=((\partial\otimes I^{\otimes (k-2)})\mathcal{M}^{(k-1)})\otimes I 
=((\mathcal{M}^{(k)}+I)(\partial\otimes I^{\otimes (k-2)}))\otimes I \\
&=(\mathcal{M}^{(k)}\otimes I)(\partial\otimes I^{\otimes (k-1)})
+\partial\otimes I^{\otimes (k-1)},
\end{split}
\]
while the last term of \eqref{tagtag} is 
\[
(\partial\otimes I^{\otimes (k-1)})(I^{\otimes (k-1)}\otimes\mathcal{M}) 
= (I^{\otimes (k-1)}\otimes\mathcal{M})(\partial\otimes I^{\otimes (k-1)}), 
\]
completing the induction step for \eqref{e:3:2}.  

For $k=1$, the equality \eqref{e:3:30} readily follows 
from \eqref{tag} and since $\mathcal{M}$ is the counting number operator.  
Alternatively, this is just the same as  \eqref{ep:200}.  

For $k\ge2$, using \eqref{e:3:2}, one easily shows that 
\begin{equation}\label{e:3:201}
\mathcal{M}^{(k)}\partial^{(k-1)}=\partial^{(k-1)}(\mathcal{M}-k+1).
\end{equation}
The rest of the proof reduces to showing that  
\[
\langle \partial^{(k-1)}(\mathcal{M}-(k-1)I)\phi,\partial^{(k-1)}
\psi\rangle_{\alpha^{\otimes(k)}}=k\langle \partial^{(k)}\phi,\partial^{(k)}\psi 
\rangle_{\alpha^{\otimes (k+1)}}.
\]
To do so, it is enough to 
take $\phi=\psi_{l}$ and $\psi=\psi_{l'}$ 
for some $l,l'\ge k-1$, i.e., to show that  
\[
(l-k+1)\langle \partial^{(k-1)}\psi_{l},\partial^{(k-1)}\psi_{l'}\rangle_{\alpha^{\otimes(k)}}
=k\langle \partial^{(k)}\psi_{l},\partial^{(k)}\psi_{l'} \rangle_{\alpha^{\otimes (k+1)}}.
\]
Now, an elementary calculation based on \eqref{ep:23bis} reveals that 
\begin{equation}\label{e:3:50} 
\partial^{(k)}\psi_{l}
=\sum_{a_{1}+a_{2}+\dots+a_{k+1}=l-k}
\psi_{a_{1}}\otimes \psi_{a_{2}}\otimes\dots\otimes \psi_{a_{k+1}}, 
\end{equation}
where the summation is over all possible writings of $l-k=a_{1}+a_{2}+\dots +a_{k+1}$, 
with all $a_{1},a_{2},\dots,a_{k+1}\ge0$.  
It remains to show that 
\[
(l-k+1)N_{k-1,l-k+1}\delta_{l,l'}=kN_{k,l-k}\delta_{l,l'}, 
\]
where $N_{k,l}$ is the number of writings of $l=a_{1}+a_{2}+\dots +a_{k+1}$,  
with all $a_{1},a_{2},\dots,a_{k+1}\ge0$.  This follows from the fact that 
\begin{equation}\label{e:3:40}
N_{k,l}= \binom{l+k}{l}, 
\end{equation}
which is well known and easy to verify.  
The last part, namely \eqref{e:3:100}, is obtained in a straightforward fashion from 
\eqref{e:3:201} combined with the fact that $\mathcal{M}$ is a 
self-adjoint non-negative operator.   
\end{proof}

Using the above proposition, a refinement 
of the Poincar\'e inequality 
for the semicircular law can now be stated formally.

\begin{theorem}\label{t:10} For $k\ge1$, and any smooth function $\phi$ on $[-2,2]$, 
\begin{equation}\label{e:t:10}
\begin{split}
\frac{1}{2}\langle\mathcal{N}\phi,\phi \rangle = &\|\phi^\prime \|^{2}_{\alpha}- 
\frac{1}{2}\|\partial\phi^\prime\|^{2}_{\alpha^{\otimes 2}}
+\frac{1}{3}\|\partial^{(2)}\phi^\prime\|^{2}_{\alpha^{\otimes 3}} 
+\dots +\frac{(-1)^{k-1}}{k}\|\partial^{(k-1)}\phi^\prime\|^{2}_{\alpha^{\otimes k}}\\
& \quad +\frac{(-1)^{k}}{k}\langle \mathcal{M}^{(k)}(\mathcal{M}^{(k)}+kI)^{-1}\partial^{(k-1)}
\phi^\prime,\partial^{(k-1)}\phi^\prime \rangle_{\alpha^{\otimes k}}.
\end{split}
\end{equation}
Moreover, $\phi$ is a polynomial of degree $k$ if and only if 
\[
\frac{1}{2}\langle\mathcal{N}\phi,\phi \rangle = \|\phi^\prime \|^{2}_{\alpha}- 
\frac{1}{2}\|\partial\phi'\|^{2}_{\alpha^{\otimes 2}}
+\frac{1}{3}\|\partial^{(2)}\phi^\prime\|^{2}_{\alpha^{\otimes 3}} 
+\dots +\frac{(-1)^{k-1}}{k}\|\partial^{(k-1)}\phi^\prime\|^{2}_{\alpha^{\otimes k}}.  
\]
\end{theorem}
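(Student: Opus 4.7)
The plan is to express everything in terms of the resolvents of the tensorized operators $\mathcal{M}^{(k)}$ and iterate. With the conventions $\partial^{(0)}=I$ and $\mathcal{M}^{(1)}=\mathcal{M}$, Theorem~\ref{t:2} reads as the $k=1$ case of the template
\[
T_k \;:=\; (-1)^{k-1}\langle (\mathcal{M}^{(k)}+kI)^{-1}\partial^{(k-1)}\phi',\partial^{(k-1)}\phi'\rangle_{\alpha^{\otimes k}},
\qquad T_1 = \tfrac{1}{2}\langle \mathcal{N}\phi,\phi\rangle.
\]
The bulk of the argument is then to establish the one-step recursion
\[
T_k \;=\; \frac{(-1)^{k-1}}{k}\|\partial^{(k-1)}\phi'\|^2_{\alpha^{\otimes k}} \;+\; T_{k+1};
\]
telescoping this from $j=1$ up to $j=k-1$ and then opening up $T_k$ one last time (via the splitting below, but without the commutation step) produces \eqref{e:t:10}.

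For the one-step recursion I would first decompose
\[
(\mathcal{M}^{(k)}+kI)^{-1} \;=\; \tfrac{1}{k}I \;-\; \tfrac{1}{k}\mathcal{M}^{(k)}(\mathcal{M}^{(k)}+kI)^{-1},
\]
the first summand producing $\tfrac{(-1)^{k-1}}{k}\|\partial^{(k-1)}\phi'\|^2_{\alpha^{\otimes k}}$ upon pairing with $\partial^{(k-1)}\phi'$. To identify the second summand as $T_{k+1}$, I would push the resolvent past the derivative using \eqref{e:3:100} (which gives $(\mathcal{M}^{(k)}+kI)^{-1}\partial^{(k-1)} = \partial^{(k-1)}(\mathcal{M}+I)^{-1}$), then apply the raising identity \eqref{e:3:30} to turn $\langle \mathcal{M}^{(k)}\partial^{(k-1)}P,\partial^{(k-1)}Q\rangle_{\alpha^{\otimes k}}$ into $k\langle \partial^{(k)}P,\partial^{(k)}Q\rangle_{\alpha^{\otimes(k+1)}}$ (the extra factor $k$ cancelling the $1/k$ from the splitting), and finally apply \eqref{e:3:100} once more at level $k+1$ in the reverse direction to reinstate $(\mathcal{M}^{(k+1)}+(k+1)I)^{-1}$ in front of $\partial^{(k)}\phi'$. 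Tracking signs, the minus from the splitting combines with the $(-1)^{k-1}$ outside to give $(-1)^k$, which is precisely the sign appearing in the definition of $T_{k+1}$.

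For the characterization of the equality case, the reduced identity
\[
\tfrac{1}{2}\langle \mathcal{N}\phi,\phi\rangle \;=\; \sum_{l=1}^{k}\tfrac{(-1)^{l-1}}{l}\|\partial^{(l-1)}\phi'\|^2_{\alpha^{\otimes l}}
\]
holds if and only if $\langle \mathcal{M}^{(k)}(\mathcal{M}^{(k)}+kI)^{-1}\partial^{(k-1)}\phi',\partial^{(k-1)}\phi'\rangle_{\alpha^{\otimes k}}=0$. Since $\mathcal{M}^{(k)}$ is self-adjoint and non-negative on $L^2(\alpha^{\otimes k})$ by Proposition~\ref{p:2}, so is $\mathcal{M}^{(k)}(\mathcal{M}^{(k)}+kI)^{-1}$, and the latter has the same kernel as $\mathcal{M}^{(k)}$. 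Thus vanishing is equivalent to $\mathcal{M}^{(k)}\partial^{(k-1)}\phi' = 0$, and by the last assertion of Proposition~\ref{p:2} this means $\phi'$ is a polynomial of degree at most $k-1$, equivalently $\phi$ is a polynomial of degree at most $k$. The only genuine obstacle is the bookkeeping of the alternating signs and the $1/k$ factors across the two invocations of \eqref{e:3:100} and the single invocation of \eqref{e:3:30} at each step; packaging everything as the single recursion $T_k \mapsto T_{k+1}$ makes this transparent and reduces the remainder of the proof to a telescoping sum.
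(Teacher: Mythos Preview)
Your proof is correct and follows essentially the same approach as the paper's: both start from Theorem~\ref{t:2}, split the resolvent as $(\mathcal{M}^{(k)}+kI)^{-1}=\tfrac{1}{k}I-\tfrac{1}{k}\mathcal{M}^{(k)}(\mathcal{M}^{(k)}+kI)^{-1}$, and then use the commutation \eqref{e:3:100} together with the raising identity \eqref{e:3:30} to pass from level $k$ to level $k+1$. Your packaging of the induction as the single recursion $T_k=\tfrac{(-1)^{k-1}}{k}\|\partial^{(k-1)}\phi'\|^2_{\alpha^{\otimes k}}+T_{k+1}$ is a clean way to keep the signs and the $1/k$ factors straight, and your treatment of the equality case is in fact more explicit than the paper's (which only spells out the ``if'' direction), correctly identifying the condition as $\deg\phi\le k$ via Proposition~\ref{p:2}.
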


\begin{proof}
We prove \eqref{e:t:10} by induction starting with \eqref{ep:9}.   
For simplicity of notation, denote $\phi'$ by $\psi$ and note that 
\[
 \langle(I+\mathcal{M})^{-1}\psi,\psi \rangle_{\alpha}
 =\langle \psi,\psi \rangle_{\alpha}-\langle \mathcal{M}(\mathcal{M}+I)^{-1}\psi,\psi \rangle_{\alpha}.
\]
Now, using \eqref{e:3:2}, 
$\partial(\mathcal{M}+I)=(\mathcal{M}^{(2)}+2I)\partial$, 
which then leads to 
\[
\langle \mathcal{M}(\mathcal{M}+I)^{-1}\psi,\psi \rangle_{\alpha}
=\langle \partial(\mathcal{M}+I)^{-1}\psi,\partial\psi \rangle_{\alpha^{\otimes 2}}
=\langle (\mathcal{M}^{(2)}+2I)^{-1}\partial\psi,\partial\psi \rangle_{\alpha^{\otimes 2}}.
\]

Moreover, by repeating this argument, the last term above becomes:  
\[
\langle (\mathcal{M}^{(2)}+2I)^{-1}\partial\psi,\partial\psi \rangle_{\alpha^{\otimes 2}}
=\frac{1}{2}\langle \partial\psi,\partial\psi \rangle_{\alpha^{\otimes 2}}
-\frac{1}{2}\langle \mathcal{M}^{(2)}(\mathcal{M}^{(2)}+2I)^{-1}\partial\psi,\partial\psi 
\rangle_{\alpha^{\otimes 2}}.
\]
Now that we saw the mechanics on how to proceed, we can formally 
prove the inductive step by showing that the formula \eqref{e:t:10} for $k\ge1$, implies 
the case $k+1$.  To do so, using \eqref{e:3:201}, \eqref{e:3:30} and 
again \eqref{e:3:201}, allow to first justify that 
\[
\begin{split}
\langle \mathcal{M}^{(k)}&(\mathcal{M}^{(k)}+kI)^{-1}\partial^{(k-1)}
\psi,\partial^{(k-1)}\psi \rangle_{\alpha^{\otimes k}}
=\langle \mathcal{M}^{(k)}\partial^{(k-1)}(\mathcal{M}+I)^{-1} \psi,\partial^{(k-1)}\psi 
\rangle_{\alpha^{\otimes k}} \\ 
&=k\langle \partial^{(k)}(\mathcal{M}+I)^{-1} \psi,\partial^{(k)}\psi \rangle_{\alpha^{\otimes k}} \\ 
&=k\langle(\mathcal{M}^{(k+1)}+(k+1)I)^{-1}\partial^{(k)} \psi,\partial^{(k)}\psi
\rangle_{\alpha^{\otimes k}} \\
&=\frac{k}{k+1}\langle \partial^{(k)} \psi,\partial^{(k)}\psi
\rangle_{\alpha^{\otimes k}} -\frac{k}{k+1}\langle(\mathcal{M}^{(k)}(\mathcal{M}^{(k+1)}+(k+1)I)^{-1}\partial^{(k)} \psi,\partial^{(k)}\psi\rangle_{\alpha^{\otimes k}}.
\end{split}
\]
Therefore,
\[
\begin{split}
&\frac{1}{k}\langle \mathcal{M}^{(k)}(\mathcal{M}^{(k)}+kI)^{-1}\partial^{(k-1)}
\psi,\partial^{(k-1)}\psi \rangle_{\alpha^{\otimes k}} \\
&\qquad =\frac{1}{k+1}\| \partial^{(k)} \psi\|_{\alpha^{\otimes k}}^{2}
-\frac{1}{k+1}\langle(\mathcal{M}^{(k)}
(\mathcal{M}^{(k+1)}+(k+1)I)^{-1}\partial^{(k)} \psi,\partial^{(k)}\psi\rangle_{\alpha^{\otimes k}},
\end{split}
\]
proving the main induction step.  

It is also clear that the last term in \eqref{e:t:10} is zero since for a 
polynomial $\phi$ of degree $k$, $\partial^{(k-1)}\phi'$ is constant, 
and $\mathcal{M}^{(k)}$ vanishes on constants.   
\end{proof}

As a consequence of Theorem~\ref{t:10}, we also have the following result. 

\begin{corollary}  For any $k\ge1$ and any smooth function $\phi$ on $[-2,2]$, 
\[
\sum_{l=1}^{2k}\frac{(-1)^{l-1}}{l}\|\partial^{(l-1)}\phi^\prime\|^{2}_{\alpha^{\otimes l}}
\le \frac{1}{2}\langle\mathcal{N}\phi,\phi 
\rangle\le 
\sum_{l=1}^{2k-1}\frac{(-1)^{l-1}}{l}\|\partial^{(l-1)}\phi^\prime\|^{2}_{\alpha^{\otimes l}}.
\]
Above, equality is attained on the left-hand side for any polynomial $\phi$ of degree $2k$, while 
on the right-hand side it is attained for any polynomial $\phi$ of degree $2k-1$.   

Also, 
\[
\frac{1}{2}\langle\mathcal{N}\phi,\phi \rangle 
= \sum_{l=1}^{\infty}\frac{(-1)^{l-1}}{l}\|\partial^{l-1}\phi'\|^{2}_{\alpha^{\otimes l}},
\]
provided the series converges (for instance,  
this is always the case if $\phi$ is a polynomial).   
\end{corollary}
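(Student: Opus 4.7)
My plan is to read off the corollary directly from the exact identity in Theorem~\ref{t:10}, using nothing more than the non-negativity of $\mathcal{M}^{(k)}$ and a sign-tracking argument.

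The starting point is the exact formula
\[
\frac{1}{2}\langle\mathcal{N}\phi,\phi\rangle
=\sum_{l=1}^{k}\frac{(-1)^{l-1}}{l}\|\partial^{(l-1)}\phi'\|^{2}_{\alpha^{\otimes l}}
+\frac{(-1)^{k}}{k}\bigl\langle \mathcal{M}^{(k)}(\mathcal{M}^{(k)}+kI)^{-1}\partial^{(k-1)}\phi',\partial^{(k-1)}\phi'\bigr\rangle_{\alpha^{\otimes k}}.
\]
The first step is to observe that, since $\mathcal{M}^{(k)}$ is essentially self-adjoint and non-negative on $L^{2}(\alpha^{\otimes k})$ (by Proposition~\ref{p:2}), functional calculus shows that $\mathcal{M}^{(k)}(\mathcal{M}^{(k)}+kI)^{-1}$ is bounded, self-adjoint and non-negative (its spectrum lies in $[0,1]$ since $\lambda\mapsto \lambda/(\lambda+k)$ takes $[0,\infty)$ into $[0,1)$). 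Consequently the bracket appearing as a remainder is always $\ge 0$, so the remainder term has the sign of $(-1)^{k}$.

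From here the inequalities are immediate: taking $k=2j-1$ (odd) in the exact formula, the remainder is subtracted, so the partial sum of $2j-1$ terms gives an upper bound for $\frac{1}{2}\langle\mathcal{N}\phi,\phi\rangle$; taking $k=2j$ (even), the remainder is added, so the partial sum of $2j$ terms gives a lower bound. This is precisely the double inequality asserted. For the equality cases, the remainder vanishes if and only if $\mathcal{M}^{(k)}(\mathcal{M}^{(k)}+kI)^{-1}\partial^{(k-1)}\phi'=0$; since $(\mathcal{M}^{(k)}+kI)^{-1}$ is injective, this is equivalent to $\mathcal{M}^{(k)}\partial^{(k-1)}\phi'=0$, which by the last assertion of Proposition~\ref{p:2} applied to $\phi'$ is equivalent to $\phi'$ being a polynomial of degree $k-1$, i.e.\ $\phi$ being a polynomial of degree $k$. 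Plugging in $k=2j$ and $k=2j-1$ yields the stated equality cases.

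For the series identity, I would estimate the remainder uniformly by using $0\le \mathcal{M}^{(k)}(\mathcal{M}^{(k)}+kI)^{-1}\le I$, which gives
\[
0\le \frac{1}{k}\bigl\langle \mathcal{M}^{(k)}(\mathcal{M}^{(k)}+kI)^{-1}\partial^{(k-1)}\phi',\partial^{(k-1)}\phi'\bigr\rangle_{\alpha^{\otimes k}}\le \frac{1}{k}\|\partial^{(k-1)}\phi'\|^{2}_{\alpha^{\otimes k}}.
\]
Convergence of the series $\sum_{l\ge 1}\frac{1}{l}\|\partial^{(l-1)}\phi'\|^{2}_{\alpha^{\otimes l}}$ forces its general term to tend to $0$, which makes the remainder vanish in the limit $k\to\infty$, and the exact formula then passes to the claimed infinite-series identity. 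For polynomial $\phi$ the expansion in \eqref{e:3:50} shows that $\partial^{(l-1)}\phi'=0$ for $l$ large enough, so the series is actually finite and convergence is automatic. I do not expect any genuine obstacle: the work is entirely in Theorem~\ref{t:10} and Proposition~\ref{p:2}; the corollary is the standard trick of Taylor-like expansions with a manifestly signed remainder.
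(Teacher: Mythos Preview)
Your proof is correct and is exactly the argument the paper has in mind: the paper gives no explicit proof, merely stating the corollary ``as a consequence of Theorem~\ref{t:10}'', and your sign-tracking on the remainder via the non-negativity of $\mathcal{M}^{(k)}$ (Proposition~\ref{p:2}) is the intended deduction. One cosmetic point: for the series identity you phrase the hypothesis as convergence of $\sum_{l}\tfrac{1}{l}\|\partial^{(l-1)}\phi'\|^{2}$, whereas the corollary assumes only convergence of the alternating series---but since convergence of any series forces its general term to $0$, and that is all your remainder bound uses, the argument goes through unchanged.
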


\section{Heuristics}\label{s:heur}

The main purpose of the present section is to give heuristic 
arguments justifying the presence of the operators involved in the proof 
of the free Brascamp-Lieb inequality.

We start with the classical case.   On $\R^{d}$, consider a probability measure of the 
form $\mu(dx)=e^{-V(x)}dx$, where $V$ is a smooth function on $\R^{d}$.  
The measure $\mu_{V}$ is the invariant measure of the operator 
$L=-\Delta+\nabla V\cdot \nabla$ which in turn is a generalization of the 
Ornstein-Uhlenbeck operator.  A simple integration by parts leads to  
\begin{equation}\label{e2:1}
\E_{\mu}[ (Lf) g ]= \E_{\mu}[\langle \nabla f,\nabla g\rangle]. 
\end{equation}

One of the classical approaches to the Brascamp-Lieb inequality 
is due to Helffer \cite{MR1624506} and we quickly review it here.  
First, from  \eqref{e2:1} with $f$ replaced by $Lf$, 
\begin{equation}\label{e3:1:0}
\langle L\phi,L\phi \rangle_{L^{2}(\mu)}
=\langle \nabla L\phi,\nabla \phi\rangle_{L^{2}(\mu)}.
\end{equation}

Second, with the natural (component-wise) extension of $L$ to several dimensions, 
\[
\nabla L =L\nabla+Hess V\nabla=(L+Hess V)\nabla.  
\]
In particular, if $K:=L+Hess V$, then 
\begin{equation}\label{e:c}
\nabla L=K\nabla,
\end{equation}
and so, using inverses whenever these are defined, 
\begin{equation}\label{e3:1:1}
K^{-1}\nabla=\nabla L^{-1}. 
\end{equation}

If $f$ is a smooth function such that 
$\int f\,d\mu=0$, \eqref{e3:1:0} with 
$\phi=L^{-1}f$ and \eqref{e3:1:1} lead to 
\begin{equation}\label{e3:1:4}
\Var_{\mu}(f)=\langle K^{-1}\nabla f, \nabla f \rangle_{L^{2}(\mu)}.
\end{equation}
Since, $L$ is a non-negative operator and 
since $Hess V$ is positive definite, it follows that 
$Hess V\le K$ and so $K^{-1}\le (Hess V)^{-1}$, from which the Brascamp-Lieb inequality
\begin{equation}\label{e3:1:5}
\Var_{\mu}(f)\le\langle (Hess V)^{-1}\nabla f, \nabla f \rangle_{L^{2}(\mu)}
\end{equation}
follows naturally.    

We now wish to apply these types of arguments to the case 
of $\mu=\p_{V}^{n}$.  To do so, let $\phi:\R\to\R$ be non-constant,  
compactly supported and smooth, and let $f(X)=\Tr \phi(X)$, 
for any $X\in\mathcal{H}_{n}$.   For a better understanding, we actually back 
up a step  
and start with 
\begin{equation}\label{e3:1:10}
\Var_{\p_{V}^{n}}(f)=\langle \nabla ((L^{n}_{V})^{-1}\Tr \phi), 
\nabla \Tr\phi \rangle_{\p_{V}^{n}}
=\langle (K^{n}_{V})^{-1}\nabla\Tr \phi, \nabla \Tr\phi \rangle_{\p_{V}^{n}}.   
\end{equation}

We next wish to understand what happens if we let $n$ tend to infinity in \eqref{e3:1:4}.  
The limit of the left-hand side is determined by the fluctuations of random matrices, and 
(say, provided that $V$ is a polynomial of even degree with equilibrium 
measure $\mu_{V}$ having support $[-2,2]$), it is given by:    
\[
\lim_{n\to\infty}\Var_{\p_{V}^{n}}(f)=\int_{-2}^{2}\int_{-2}^{2}
\left( \frac{\phi(x)-\phi(y)}{x-y}\right)^{2}
\frac{(4-xy)}{4\pi^{2}\sqrt{(4-x^{2})(4-y^{2})}}dx\,dy.
\]

For the right-hand side of \eqref{e3:1:10} observe first 
that $\nabla \Tr \phi(X)=\phi^\prime(X)$.  Now, 
 \[
-L^{n}_{V}\Tr \phi(X)=\Delta\Tr \phi(X)-n\nabla \Tr V(X)\cdot \nabla \Tr \phi(X)
=\Delta\Tr 
\phi(X)-n\Tr(V^\prime(X)\phi^\prime(X)),
\]
hence we need to identify the limit of the operators $K^{n}_{V}$.   
 
Hence, the Laplacian on matrices needs to be computed, and 
we do so for monomials of the form $\phi(X)=X^{a}$ and then 
extend the result by linearity.  By definition, 
\[
\Delta \Phi(X)=\sum_{\gamma} \frac{d^{2}}{d h^{2}}\Phi(X+h E_{\gamma}),
\] 
where $E_{\gamma}$ is an orthonormal basis of $\mathcal{H}_{n}$.  
In fact, a basis consists of the matrices $E_{jj}$ which have 
$1$ on the $j$th position on the diagonal and $0$ elsewhere, $E_{jk}$ with $j<k$ 
with $1/\sqrt{2}$ for the $(j,k)$th and $(k,j)$th entries and $0$ otherwise, 
and $\tilde{E}_{jk}$ with $i/\sqrt{2}$ for the $(j,k)$th entry and $-i/\sqrt{2}$ for 
the $(k,j)$th entry and $0$ otherwise.  
Using this basis,  
\[
\begin{split}
\frac{1}{2}(\Delta \Tr \phi)(X)
&=\sum_{1\le j\le k\le n} \sum_{l_{1}+l_{2}+l_{3}=a-2}\Tr( X^{l_{1}} E_{jk}X^{l_{2}}E_{jk}X^{l_{3}})
+\sum_{1\le j<k\le n} \sum_{l_{1}+l_{2}+l_{3}=a-2}\Tr( X^{l_{1}} 
\tilde{E}_{jk}X^{l_{2}}\tilde{E}_{jk}X^{l_{3}})\\
&=\sum_{1\le j\le k\le n} \sum_{l_{1}+l_{2}+l_{3}=a-2}\Tr( X^{l_{1}+l_{3}} E_{jk}X^{l_{2}}E_{jk})
+\sum_{1\le j<k\le n} \sum_{l_{1}+l_{2}+l_{3}=a-2}\Tr( X^{l_{1}+l_{3}} 
\tilde{E}_{jk}X^{l_{2}}\tilde{E}_{jk}) \\
&=\sum_{l=0}^{a-2}(l+1)\sum_{1\le j\le k\le n} \Tr( X^{l} E_{jk}X^{a-2-l}E_{jk})
+\sum_{l=0}^{a-2}(l+1)\sum_{1\le j<k\le n} \Tr( X^{l} \tilde{E}_{jk}X^{a-2-l}\tilde{E}_{jk}).
\end{split}
\] 
Let $F_{jk}$ be the matrix with $1$ on the $(j,k)$th entry and $0$ otherwise.  
Then $E_{jk}=(F_{jk}+F_{kj})/\sqrt{2}$ and $\tilde{E}_{jk}=i(F_{jk}-F_{kj})/\sqrt{2}$, for $j<k$.  
A small computation reveals that for two matrices $A$ and $B$, 
\begin{equation}\label{e3:1:12}
\begin{split}
\sum_{1\le j\le k\le n} \Tr( A E_{jk}BE_{jk}) +\sum_{1<j<k\le n}\Tr(A \tilde{E}_{jk} B\tilde{E}_{jk})
&=\sum_{j,k=1}^{n} \Tr( A F_{jk}BF_{jk})) \\ 
& =\sum_{j,k=1}^{n} \sum_{u_{1},u_{2},u_{3},u_{4}=1}^{n}A_{u_{1}u_{2}} (F_{jk})_{u_{2}u_{3}}B_{u_{3}u_{4}}(F_{kj})_{u_{4}u_{1}} \\
&=\sum_{j,k=1}^{n} A_{jj}B_{kk}=\Tr(A)\Tr(B).
\end{split}
\end{equation}
Therefore, 
\[
(\Delta \Tr\phi)(X)=2\sum_{l=0}^{a-2}(l+1)\Tr(X^{l})\Tr(X^{a-2-l}).  
\] 
Summing up our findings, for $\phi(x)=x^{a}$, $a\ge2$, 
\[
\begin{split}
-L^{n}_{V}\Tr \phi(X)&=2\sum_{l=0}^{a-2}(l+1)\Tr( X^{l})\Tr(X^{a-2-l})-na\Tr(V'(X)X^{a-1}) \\
&=\sum_{l=0}^{a-2}(l+1)\Tr( X^{l})\Tr(X^{a-2-l})
+\sum_{l=0}^{a-2}(a-1-l)\Tr( X^{l})\Tr(X^{a-2-l})-na\Tr(V'(X)X^{a-1}) \\
&=\Tr\otimes\Tr(\partial \phi'(X))-n\Tr(V'(X)\phi'(X)),
\end{split}
\]
which, by linearity, is then true for all polynomials $\phi$.  
Taking the gradient, then gives
\[
\begin{split}
-\nabla (L^{n}_{V}\Tr\phi)(X)&=\nabla \Tr\otimes \Tr(\partial 
\phi^\prime(X))-n(V^\prime\phi^\prime)^\prime
=(D\otimes \Tr)\partial \phi^\prime(X)+(\Tr\otimes D) \partial 
\phi^\prime(X)-n(V^\prime\phi^\prime)^\prime(X).
\end{split}
\]
In particular, using the operator $K_{V}^{n}$ which satisfies  
$\nabla (L^{n}_{V}\Tr\phi)(X)=K^{n}_{V}\nabla \Tr \phi(X)$, we now obtain 
\[
\frac{1}{n}K_{V}^{n}\nabla \Tr\phi (X)=-\frac{1}{n}(D\otimes \Tr 
+ \Tr \otimes D)\partial \phi^\prime(X)+(V^\prime\phi^\prime)^\prime(X), 
\]
and therefore, since $\partial \phi$ is a symmetric tensor,  
\[
\frac{1}{n}K_{V}^{n}\phi^\prime(X)
=-\frac{2}{n}D(I\otimes \Tr)\partial \phi^\prime(X)+(V^\prime\phi^\prime)^\prime(X).
\]
Finally, since $\Tr \psi(X)/n$ converges to $\mu_{V}(\psi)$, heuristically  
$K^{n}_{V}\phi^\prime/n$ converges to 
\[
K\phi^\prime=-2D(I\otimes \mu_{V})\partial \phi^\prime+(V^\prime\phi^\prime)^\prime
\]
and replacing $\phi^\prime$ by $\phi$, motivates the following definition:   
\begin{equation}\label{e3:4}
\mathcal{K}_{V}\phi:=-D[2(I\otimes \mu_{V})\partial \phi-V^\prime\phi]
=-D[2(I\otimes \mu_{V})\partial \phi]+V^\prime\phi^\prime +V^{\prime\prime} \phi.
\end{equation}
It is interesting to remark that using, for instance, 
\cite[Corollary 4.4 and Proposition 3.5]{V5} 
one can justify the following equality
\[
-D[2(I\otimes \mu_{V})\partial \phi]+V^\prime\phi^\prime=\partial^{*}_{V}\partial,
\]
where $\partial^{*}_{V}$ is the adjoint of the operator $\partial$, i.e., 
$\langle \partial^{*}_{V}(\phi\otimes\psi),\eta \rangle_{\mu_{V}}=\langle 
\phi\otimes \psi, \partial \eta \rangle_{\mu_{V}\otimes\mu_{V}}$ and the inner 
product generated by a state $\mu$ on polynomials $\C\langle X\rangle$ 
is $\langle \phi,\psi \rangle_{\mu}=\mu(\phi\,\bar{\psi})$, for any polynomials 
$\phi,\psi$ with the convention that $\bar{\psi}=\sum_{i}\bar{a}_{i}X^{i}$ 
if $\psi=\sum_{i}a_{i}X^{i}$.   The extension to tensor products is done via  
the usual procedure: 
$$\langle \phi_{1}\otimes \phi_{2}, \psi_{1}\otimes\psi_{2}  \rangle_{\mu\otimes\mu}=\mu(\phi_{1}\bar{\psi}_{1})\mu(\phi_{2}\bar{\psi}_{2}),$$   
and the representation therefore 
obtained has the flavor of a generalization of the non-commutative 
Ornstein-Uhlenbeck operator.   

Finally, heuristically, taking the limit in \eqref{e3:1:10}, it follows that 
\begin{equation}
\frac{1}{2}\langle \mathcal{N}f,f \rangle
=\langle \mathcal{K}_{V}^{-1}f^\prime,f^\prime \rangle_{L^{2}(\mu_{V})}.
\end{equation}

\section{The Free Brascamp-Lieb Inequality}\label{s:7}

From the heuristics of the previous section, let  
\[
\mathcal{K}_{V}\phi:=D\left[-2(I\otimes \mu_{V})\partial \phi +V^\prime\phi \right], 
\]
and set 
\[
\mathcal{M}_{V}\phi:=-2D(I\otimes \mu_{V})\partial \phi+V^\prime\phi^\prime.
\]
Again, if $V(x)=x^{2}/2$, then $\mathcal{M}_{V}$ is the counting number operator 
for the Chebyshev polynomials of the 
second kind.   It is clear that, 
\[
\mathcal{K}_{V}\phi=\mathcal{M}_{V}\phi+V^{\prime\prime}\phi,  
\]
and it is trivial that, for any function $f$, the multiplication operator 
\[
\mathcal{A}_{f}\phi=f\phi, 
\]
extends to a self-adjoint operator on $L^{2}(\mu_{V})$.  
As shown next, the operator $\mathcal{M}_{V}$ is a non-negative operator 
on $L^{2}(\mu_{V})$.

\begin{proposition}\label{p:100}  Assume $\mu_{V}$ has support $[-2,2]$.  
The operator $\mathcal{M}_{V}$ is given on $C^{2}$ functions by
\begin{equation}\label{e4:1}
(\mathcal{M}_{V}\phi)(x)=2p.v.\int \frac{\phi(x)-\phi(y)}{(x-y)^{2}}\mu_{V}(dy) 
:= 2\lim_{\epsilon\searrow0}\int_{|x-y|\ge\epsilon} \frac{\phi(x)-\phi(y)}{(x-y)^{2}}\mu_{V}(dy),
\end{equation}
and for $C^{1}$ functions, $\phi,\psi$ on $[-2,2]$, 
\begin{equation}\label{e4:2}
\langle \mathcal{M}_{V}\phi,\psi \rangle_{V}
=\int \frac{(\phi(x)-\phi(y))(\psi(x)-\psi(y))}{(x-y)^{2}}\mu_{V}(dx)\mu_{V}(dy).
\end{equation}
Moreover, $\mathcal{M}_{V}$ can be extended to an unbounded non-negative essentially 
self-adjoint operator on $L^{2}(\mu_{V})$ whose domain includes 
the set of $C^1$ functions on $[-2,2]$.  

In addition, if $V''\ge0$, and $V$ is $C^{2}$ on $[-2,2]$, then the operator $\mathcal{K}_{V}$ 
has a self-adjoint extension such that for some $\delta>0$, $\mathcal{K}_{V}\ge \delta I$.  
In particular, $\mathcal{K}_{V}$ is an essentially self-adjoint operator on $L^{2}(\mu_{V})$ 
which is invertible with a bounded inverse on $L^{2}(\mu_{V})$.   
\end{proposition}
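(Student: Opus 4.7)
The strategy is to import the treatment of $\mathcal{M}$ from Proposition~\ref{p:10}(6) with $\alpha$ replaced by $\mu_V$ throughout, and then to view $\mathcal{K}_V = \mathcal{M}_V + \mathcal{A}_{V''}$ as a bounded multiplicative perturbation of $\mathcal{M}_V$.

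First, for the principal value representation \eqref{e4:1}, I would unfold the definition of $\mathcal{M}_V$ on $C^2$ functions by writing $2(I\otimes \mu_V)\partial\phi(x) = 2\int \frac{\phi(x)-\phi(y)}{x-y}\mu_V(dy)$ and then using the variational characterization \eqref{e0:5} (valid on $\supp\mu_V = [-2,2]$) to rewrite $V'(x)\phi'(x) = \phi'(x)\,p.v.\!\int \frac{2}{x-y}\mu_V(dy)$. The manipulation from \eqref{ep:100} then transfers verbatim and collapses the two terms into the singular integral in \eqref{e4:1}. For the quadratic form identity \eqref{e4:2}, I would repeat the symmetrization chain of \eqref{e0:300} with $\alpha$ replaced by $\mu_V$; each step there rests only on the algebraic cancellation of principal values together with $p.v.\!\int \frac{2}{x-y}\mu_V(dy) = V'(x)$, both of which persist here.

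The identity \eqref{e4:2} exhibits $\mathcal{M}_V$ as the generator of the symmetric non-negative bilinear form
\begin{equation*}
\mathcal{E}_V(\phi,\psi) = \iint \frac{(\phi(x)-\phi(y))(\psi(x)-\psi(y))}{(x-y)^2}\mu_V(dx)\mu_V(dy),
\end{equation*}
which is densely defined on $L^2(\mu_V)$ and has $C^1([-2,2])$ in its form-domain. Closability and the resulting unique non-negative essentially self-adjoint extension of $\mathcal{M}_V$ then follow from standard Dirichlet form theory \cite{MR1303354}, exactly as in the proof of Proposition~\ref{p:10}(6).

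For the final assertion concerning $\mathcal{K}_V$, continuity of $V''$ on the compact interval $[-2,2]$ makes $\mathcal{A}_{V''}$ a bounded self-adjoint operator on $L^2(\mu_V)$, so a Kato--Rellich type stability argument shows that $\mathcal{K}_V$ is essentially self-adjoint on the same core as $\mathcal{M}_V$. Reading the convexity hypothesis in its quantitative form $V''\ge \lambda > 0$, used throughout the paper, one gets $\mathcal{A}_{V''}\ge \lambda I$; adding the non-negative $\mathcal{M}_V$ yields $\mathcal{K}_V\ge \lambda I$, and invertibility with $\|\mathcal{K}_V^{-1}\|\le 1/\lambda$ follows. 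The main obstacle I foresee is the closability of $\mathcal{E}_V$: since $\mu_V$ can vanish like $\sqrt{4-x^2}$ at the endpoints (consistent with \eqref{ep:40}), one must check carefully that the difference quotient $(\phi(x)-\phi(y))/(x-y)$ is square integrable against $\mu_V\otimes\mu_V$ for $C^1$ test functions. The vanishing of the density, however, precisely dampens the only place where the kernel $1/(x-y)^2$ could be singular, so this should reduce to a routine regularity estimate rather than a genuine difficulty.
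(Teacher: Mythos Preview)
Your treatment of $\mathcal{M}_{V}$ --- the principal value formula, the quadratic form identity, and the Dirichlet form argument for essential self-adjointness --- is correct and is precisely the paper's approach.

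The gap is in the second half. The proposition only assumes $V''\ge 0$, not $V''\ge\lambda>0$; the strict positivity in the introduction is heuristic, and Theorem~\ref{t:11} also requires only $V''\ge0$. Under $V''\ge0$ alone you cannot conclude $\mathcal{A}_{V''}\ge\lambda I$, so your lower bound $\mathcal{K}_V\ge\lambda I$ does not follow. The paper handles this by a genuinely different argument. First, it shows $V''$ cannot vanish identically on $[-2,2]$: if it did, $V'$ would be constant, contradicting the two moment conditions $\int V'\,d\beta=0$ and $\int xV'\,d\beta=2$ forced by $\supp\mu_V=[-2,2]$. Hence $V''\ge\epsilon$ on some set $A$ with $\mu_V(A)>0$. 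Second, it exploits the kernel bound $1/(x-y)^2\ge 1/16$ on $[-2,2]^2$ to obtain
\[
\langle\mathcal{K}_V\phi,\phi\rangle_{\mu_V}\ge \tfrac{1}{8}\Var_{\mu_V}(\phi)+\epsilon\int_A\phi^2\,d\mu_V,
\]
and then a Cauchy--Schwarz optimization over $A$ and $A^c$ shows the right-hand side dominates $\delta\|\phi\|_{\mu_V}^2$ for some $\delta>0$. This is the missing idea: the strict positivity of $\mathcal{K}_V$ comes not from $V''$ alone but from the combination of a variance-type lower bound on $\mathcal{M}_V$ with the positivity of $V''$ on a set of positive measure.
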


\begin{proof}
The statements in the first part of this proposition, namely, \eqref{e4:1} and \eqref{e4:2} follow from 
arguments similar to those involved in the proof of \eqref{ep:20} from Proposition~\ref{p:10}.  
Start with a $C^{2}$ function $\phi$ on $[-2,2]$ and notice that 
\[
\partial\phi=\frac{\phi(x)-\phi(y)}{x-y},   
\]
and thus 
\[
(I\otimes\mu_{V})(\partial \phi)(x)=\int \frac{\phi(x)-\phi(y)}{x-y}\mu_{V}(dy).
\]
Next, $\phi$ is a $C^{2}$ function which when combined with  
the variational characterization of the equilibrium measure from \eqref{e0:5}, leads to  
\begin{equation}\label{e4:6}
\begin{split}
-2\frac{d}{dx}\int \frac{\phi(x)-\phi(y)}{x-y}\mu_{V}(dy)
&=2\int \frac{\phi(x)-\phi(y)-\phi'(x)(x-y)}{(x-y)^{2}}\mu_{V}(dy) \\ 
&= 2p.v.\int \frac{\phi(x)-\phi(y)}{(x-y)^{2}}\mu_{V}(dy) 
-\phi^\prime(x)p.v.\int\frac{2}{x-y}\mu_{V}(dy)\\ 
&=2p.v.\int \frac{\phi(x)-\phi(y)}{(x-y)^{2}}\mu_{V}(dy)
-V^\prime(x)\phi^\prime(x),
\end{split}
\end{equation}
giving \eqref{e4:1}.   In turn, \eqref{e4:2} follows easily for $C^{2}$ functions exactly 
as in \eqref{e0:300}, replacing $\alpha$ by $\mu_{V}$.  Next, the extension to 
a self-adjoint operator is deduced from the fact that the Dirichlet form 
\[
\mathcal{D}(\phi,\phi)=\int\left( \frac{\phi(x)-\phi(y)}{x-y}\right)^2\mu_{V}(dx)\mu_{V}(dy)
\]
is positive and closable, therefore  $\mathcal{M}_V$, its generator, according 
to \cite{MR1303354} must be essentially self-adjoint and non-negative.  
This last fact and standard 
approximations of $C^{1}$ functions with $C^{2}$ functions proves 
\eqref{e4:2} for $C^{1}$ functions.   

Since $V$ is a $C^{2}$ function, the multiplicative operator 
$\mathcal{A}_{V''}$ is a bounded 
operator on $L^{2}(\mu_{V})$ and this implies, for instance, that 
$\mathcal{K}_{V}$ has a 
non-negative extension with the same domain as $\mathcal{M}_{V}$.   
In addition, we claim that $V''>0$ on a set of positive measure (with respect to $\mu_{V}$).   
Indeed if otherwise, then $V''$ would be identically 0 on $[-2,2]$ which means, for 
example, that $V'$ is constant on $[-2,2]$.  Since the support 
of $\mu_{V}$ is $[-2,2]$, it follows that 
(e.g., see \cite[Theorem 1.11 Chapter IV]{ST} or \cite[Equation (4.3)]{i7}) 
\begin{equation}\label{e4:21}
\int V'(x)\beta(dx)=0 \text{ and } \int xV'(x)\beta(dx)=2.
\end{equation}
These equalities  cannot be satisfied if $V'$ is constant on $[-2,2]$.   
Therefore, there must be a subset $A\subset [-2,2]$ with $\mu_{V}(A)>0$ and a 
positive $\epsilon>0$ such that $V''(x)>\epsilon$, for all $x\in A$.  
On the other hand, for $x,y\in[-2,2]$, $\frac{1}{(x-y)^{2}}\ge \frac{1}{16}$, 
and then 
\[
\begin{split}
\langle \mathcal{K}_{V}\phi,\phi \rangle
&\ge \frac{1}{16}\int(\phi(x)-\phi(y))^{2}\mu_{V}(dx)\mu_{V}(dy)+\epsilon \int_{A}\phi^{2}d\mu_{V}\\ 
&= \epsilon \int_{A}\phi^{2}d\mu_{V}+\frac{1}{8}\int\phi^{2}d\mu_{V}-\frac{1}{8}\left(\int \phi\,d\mu_{V}\right)^{2}.
\end{split}
\]
Next, we wish to show that there exists $\delta>0$ such that 
\[
\epsilon \int_{A}\phi^{2}d\mu_{V}
+\frac{1}{8}\int\phi^{2}d\mu_{V}-\frac{1}{8}\left(\int \phi\,d\mu_{V}\right)^{2}
\ge \delta \int\phi^{2}d\mu_{V},
\]
or equivalently,
\[
(1+8\epsilon-8\delta) \int_{A}\phi^{2}d\mu_{V}+(1-8\delta)\int_{A^{c}}\phi^{2}d\mu_{V}
\ge \left(\int \phi\,d\mu_{V}\right)^{2}.
\]
To show this is possible, notice that if $A$ has full measure, then we can 
take $\delta=\epsilon$ and we are done.   If not, then $\mu_{V}(A)>0$ and 
$\mu_{V}(A^{c})>0$.   
By the Cauchy-Schwarz inequality,  
\[
(1+8\epsilon-8\delta) \int_{A}\phi^{2}d\mu_{V}+(1-8\delta)\int_{A^{c}}\phi^{2}d\mu_{V}\ge \frac{(1+8\epsilon-8\delta)}{\mu_{V}(A)} \left(\int_{A}\phi d\mu_{V}\right)^{2}
+\frac{1-8\delta}{\mu_{V}(A^{c})}\left(\int_{A^{c}}\phi d\mu_{V}\right)^{2},
\]
and then $(a^{2}+b^{2})(c^{2}+d^{2})\ge (ac+bd)^{2}$, with 
$a=\sqrt{\frac{1+8\epsilon-8\delta}{\mu_{V}(A)}}\int_{A}\phi d\mu_{V}$, 
$b=\sqrt{\frac{1-8\delta}{\mu_{V}(A^{c})}}\int_{A^{c}}\phi d\mu_{V}$, $c=\sqrt{\frac{\mu_{V}(A)}{1+8\epsilon-8\delta}}$, and $d=\sqrt{\frac{\mu_{V}(A^{c})}{1-8\delta}}$ yields 
\[
(1+8\epsilon-8\delta) \int_{A}\phi^{2}d\mu_{V}+(1-8\delta)\int_{A^{c}}\phi^{2}d\mu_{V}
\ge\frac{1}{\frac{\mu_{V}(A)}{1+8\epsilon-8\delta}
+\frac{\mu_{V}(A^{c})}{1-8\delta}}\left(\int \phi d\mu_{V}\right)^{2}.
\]
Thus, we just need to choose $\delta>0$ such that 
$$\frac{\mu_{V}(A)}{1+8\epsilon-8\delta}+\frac{\mu_{V}(A^{c})}{1-8\delta}<1,$$  
which is certainly possible since the above quantity is continuous in $\delta$, and 
since for $\delta=0$, 
$$\frac{\mu_{V}(A)}{1+8\epsilon}+\mu_{V}(A^{c})<\mu_{V}(A)+\mu_{V}(A^{c})=1.$$   
The rest now follows.   
 \qedhere
\end{proof}

\begin{theorem}\label{t:11}
Let the support of the equilibrium measure $\mu_{V}$ be $[-2,2]$ and let 
$V$ be $C^{4}$ with $V''\ge0$ on $[-2,2]$.   Then, for any $C^{1}$ function 
$\phi$ on $[-2,2]$, 
\begin{equation}\label{e4:5}
\langle \mathcal{N}\phi,\phi \rangle=2\langle \mathcal{K}^{-1}_{V}\phi^\prime,
\phi^\prime \rangle_{L^{2}(\mu_{V})}.
\end{equation}
Moreover, the following version of Brascamp-Lieb inequality holds true:   
For any $C^{1}$ function $\phi$ on $[-2,2]$,   
\begin{equation}\label{e4:3}
\int_{-2}^{2}\int_{-2}^{2}\left( \frac{\phi(x)-\phi(y)}{x-y}\right)^{2}
\frac{(4-xy)}{4\pi^{2}\sqrt{(4-x^{2})(4-y^{2})}}dx\,dy
\le \int \frac{\phi'^{2}}{V^{\prime\prime}}d\mu_{V}, 
\end{equation}
with equality for $\phi(x)=V^\prime(x)+C$, $C\in \R$.   
\end{theorem}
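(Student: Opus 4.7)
My plan is to prove the operator identity \eqref{e4:5} first; the Brascamp--Lieb inequality \eqref{e4:3} will then follow at once from the non-negativity of $\mathcal{M}_V$. For \eqref{e4:5} the crucial point is that the left-hand side $\langle\mathcal{N}\phi,\phi\rangle$ is manifestly $V$-independent, so it suffices to show that $\langle\mathcal{K}_V^{-1}\phi',\phi'\rangle_{\mu_V}$ is likewise $V$-independent and then evaluate at the favorite potential $V_0(x)=x^2/2$, where $\mu_{V_0}=\alpha$, $V_0''=1$, and $\mathcal{M}_V=\mathcal{M}$, so that $\mathcal{K}_{V_0}=\mathcal{M}+I$; in that case the identity is exactly Theorem~\ref{t:2}.

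The central step is to exhibit an explicit universal solution of $\mathcal{K}_V\psi=\phi'$. Setting $\psi_0:=(\mathcal{M}+I)^{-1}\phi'\in L^2(\alpha)$, I take the ansatz
\[
\tilde\psi_V(x):=\psi_0(x)\,\frac{d\alpha}{d\mu_V}(x),
\]
chosen so that the signed measure $\tilde\psi_V\mu_V$ coincides with $\psi_0\alpha$ for every $V$. Under the hypothesis that $\mu_V$ has support $[-2,2]$ with density vanishing like $\sqrt{4-x^2}$ at the endpoints (which is generic under the convexity assumption), $d\alpha/d\mu_V$ is bounded on $[-2,2]$, hence $\tilde\psi_V\in L^2(\mu_V)$. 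To verify $\mathcal{K}_V\tilde\psi_V=\phi'$ I would use the variational relation $V'=H\mu_V$ on $[-2,2]$ from \eqref{e0:5} to split
\[
\int\frac{\tilde\psi_V(x)-\tilde\psi_V(y)}{x-y}\,\mu_V(dy)=\tfrac{1}{2}\tilde\psi_V(x)V'(x)-\tfrac{1}{2}H(\psi_0\alpha)(x),
\]
where the substitution $\tilde\psi_V\mu_V=\psi_0\alpha$ makes the second term $V$-independent. Feeding this into $\mathcal{K}_V\tilde\psi_V=-2D(I\otimes\mu_V)\partial\tilde\psi_V+(V'\tilde\psi_V)'$, the two $\tilde\psi_V V'$ contributions cancel identically, leaving $\mathcal{K}_V\tilde\psi_V=(H(\psi_0\alpha))'$. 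Specializing to $V=V_0$ gives $\phi'=\mathcal{K}_{V_0}\psi_0=(H(\psi_0\alpha))'$, so this derivative is identically $\phi'$, and therefore $\mathcal{K}_V\tilde\psi_V=\phi'$ for every admissible $V$. Invertibility of $\mathcal{K}_V$ (Proposition~\ref{p:100}) then identifies $\tilde\psi_V=\mathcal{K}_V^{-1}\phi'$, so
\[
\langle\mathcal{K}_V^{-1}\phi',\phi'\rangle_{\mu_V}=\int\phi'\,\tilde\psi_V\,d\mu_V=\int\phi'\,\psi_0\,d\alpha=\langle(\mathcal{M}+I)^{-1}\phi',\phi'\rangle_{\alpha}=\tfrac{1}{2}\langle\mathcal{N}\phi,\phi\rangle,
\]
by Theorem~\ref{t:2}, establishing \eqref{e4:5}.

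Given \eqref{e4:5}, the inequality \eqref{e4:3} is immediate: by Proposition~\ref{p:100}, $\mathcal{M}_V\ge 0$ on $L^2(\mu_V)$, so $\mathcal{K}_V=\mathcal{M}_V+\mathcal{A}_{V''}\ge\mathcal{A}_{V''}$ and hence $\mathcal{K}_V^{-1}\le\mathcal{A}_{1/V''}$; sandwiching with $\phi'$ and combining with \eqref{ep:21} rewrites both sides of \eqref{e4:3}. Equality forces $\mathcal{M}_V(\phi'/V'')=0$, and since the kernel of $\mathcal{M}_V$ consists of constants on $[-2,2]$ (the Dirichlet form is non-degenerate there), this gives $\phi'=cV''$, i.e., $\phi=cV'+C$. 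The hard part I expect is the ansatz step: recognizing the correct $V$-independent form $\tilde\psi_V=\psi_0\,d\alpha/d\mu_V$ and tracking the algebraic cancellation of the $\tilde\psi_V V'$ terms inside $\mathcal{K}_V$. This cancellation is forced by the on-support identity $V'=H\mu_V$ and is the precise mechanism that eliminates the potential-dependence from $\langle\mathcal{K}_V^{-1}\phi',\phi'\rangle_{\mu_V}$.
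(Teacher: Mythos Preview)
Your approach to the identity \eqref{e4:5} is correct and genuinely different from the paper's. Both arguments establish $V$-independence of $\langle\mathcal{K}_V^{-1}\phi',\phi'\rangle_{\mu_V}$ and then specialize to $V_0(x)=x^2/2$, but the mechanisms differ. The paper factorizes $\mathcal{K}_V=-2D^2\mathcal{E}\mathcal{A}_{(1-\frac{1}{2}\mathcal{N}V)}$ via the logarithmic potential $\mathcal{F}_V$ and then inverts each factor, which forces a detour through the auxiliary space $L^2(\nu_V)$ to handle the endpoint singularity of $\mathcal{A}_{1/(1-\frac{1}{2}\mathcal{N}V)}$. Your ansatz $\tilde\psi_V=\psi_0\,d\alpha/d\mu_V$ bypasses this entirely: the substitution $\tilde\psi_V\,\mu_V=\psi_0\,\alpha$ makes the Hilbert-transform term $V$-independent, and the on-support relation $V'=H\mu_V$ produces an exact cancellation of the remaining $V'\tilde\psi_V$ terms. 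This is a cleaner and more transparent mechanism. Note that the boundedness of $d\alpha/d\mu_V$ is not merely ``generic'': it follows rigorously from \eqref{e4:22}, which gives $d\alpha/d\mu_V=1/u(x)$ with $u>0$ and $u\in C^2$ under the $C^4$ hypothesis on $V$.

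Two points need tightening. First, your deduction of \eqref{e4:3} via $\mathcal{K}_V^{-1}\le\mathcal{A}_{1/V''}$ is not justified as stated: the operator monotonicity $A\ge B\Rightarrow A^{-1}\le B^{-1}$ requires $B$ to be bounded below, whereas $\mathcal{A}_{V''}$ need not be (only $V''\ge 0$ is assumed, and $V''$ may vanish on part of $[-2,2]$). The paper circumvents this by writing $\phi'=\mathcal{K}_V\psi$ and expanding $\langle(V'')^{-1}\mathcal{K}_V\psi,\mathcal{K}_V\psi\rangle_{\mu_V}$ directly; you should do the same. Second, the equality case does not read ``$\mathcal{M}_V(\phi'/V'')=0$'': equality in that expansion forces $\mathcal{M}_V\psi=0$ with $\psi=\mathcal{K}_V^{-1}\phi'$, hence $\psi$ is constant and $\phi'=\mathcal{K}_V(c)=cV''$, which gives the claimed $\phi=cV'+C$. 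Your conclusion is right, but the intermediate step is misstated.
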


We use the $C^{4}$ regularity of $V$ at a single place in the proof, and so 
most likely this assumption can be reduced to $C^{3}$ regularity, but we are not pursuing 
this here.   

\begin{proof}
We want to show that the right-hand side of \eqref{e4:5} is in fact independent of $V$, 
so it suffices to check it for a potential which is smooth, convex and whose 
equilibrium measure is also supported on $[-2,2]$.  
That candidate is precisely $V(x)=x^{2}/2$ and then the rest of the statement 
is just Theorem~\ref{t:2}.  

Let the operator $\mathcal{F}_{V}$  be defined as 
\[
(\mathcal{F}_{V}\phi)(x)=2\int \log|x-y|\phi(y)\mu_{V}(dy). 
\]
From \eqref{ep:40}, we know that 
$d\mu_{V}=(1-\frac{1}{2}\mathcal{N}V)d\beta$, hence for 
any $C^{2}$ function $\phi$ on $[-2,2]$, 
\[
(\mathcal{F}_{V}\phi)(x)=2\int \log|x-y|\phi(y)(1-\frac{1}{2}\mathcal{N}V(y))\beta(dy)
=-2\mathcal{E}\left(\left(1-\frac{1}{2}\mathcal{N}V\right)\phi\right)(x)
=-2\left(\mathcal{E}\mathcal{A}_{(1-\frac{1}{2}\mathcal{N}V)}\phi\right)(x).
\]
Now, taking the derivative yields, 
\[
\begin{split}
(D\mathcal{F}_{V}\phi)(x)&=2p.v.\int\frac{\phi(y)}{x-y}\mu_{V}(dy)
=-2\int\frac{\phi(x)-\phi(y)}{x-y}\mu_{V}(dy)+2p.v.\int\frac{\phi(x)}{x-y}\mu_{V}(dy) \\ 
&=-2\int\frac{\phi(x)-\phi(y)}{x-y}\mu_{V}(dy)+\phi(x)V^\prime(x).
\end{split}
\]
Taking another derivative and using \eqref{e4:6} give, 
\[
\begin{split}
(D^{2}\mathcal{F}_{V}\phi)(x)
& =(\mathcal{M}_{V}\phi)(x)-V^\prime(x)\phi^\prime(x)+(\phi V^\prime)^\prime(x)\\
&=(\mathcal{M}_{V}\phi)(x)+V^{\prime\prime}(x)\phi(x)=(\mathcal{K}_{V}\phi)(x).
\end{split}
\]
Thus, $\mathcal{K}_{V}=-2D^{2}\mathcal{E}\mathcal{A}_{(1-\frac{1}{2}\mathcal{N}V)}$,  
on $C^{2}([-2,2])$.  To finish the proof we want to take inverses.  
To do so requires to properly define the inverses and to this end, we look at 
the following diagram: 
\begin{equation}\label{e4:20}
C^{2}([-2,2])\xrightarrow[]{\mathcal{A}_{(1-\frac{1}{2}\mathcal{N}V)}} C^{2}([-2,2])\xrightarrow[]{\mathcal{E}} C^{2}([-2,2]) \cap L^{2}_{0}(\beta)\xrightarrow[]{D} C^{1}([-2,2]) \xrightarrow[]{D} C([-2,2]).
\end{equation} 
We need to justify that the composition is well defined here.  In the first place, and 
for instance, from \eqref{e4:21}, and the very definition of the operator $\mathcal{N}$, 
it follows that 
\begin{equation}\label{e4:22}
1-\frac{1}{2}(\mathcal{N}V)(x)=\frac{4-x^{2}}{2}\int\frac{V'(x)-V'(y)}{x-y}\beta(dy).  
\end{equation}
This proves two things.   In the first place, since $V$ is convex, we learn that 
$u(x):=\int\frac{V'(x)-V'(y)}{x-y}\beta(dy)$, is strictly positive and 
also $C^{2}$ on $[-2,2]$ (this is the only place where the $C^{4}$ condition on $V$ is used).   
Thus, the first operator is well defined.  The second operator is also 
well defined by Proposition~\ref{p:10}, while the other operators are self-explanatory.   
The inverses are written as follows:  
\begin{equation}\label{e4:20bis}
C([-2,2])\xrightarrow[]{\mathcal{I}} C^{1}([-2,2]) \xrightarrow[]{\mathcal{I}_{0}} C^{2}([-2,2]) \cap L^{2}_{0}(\beta)\xrightarrow[]{\mathcal{N}} C([-2,2]) \xrightarrow[]{\mathcal{A}_{1/(1-\frac{1}{2}\mathcal{N}V)}} L^{2}(\nu_{V}),
\end{equation} 
where 
\[
(\mathcal{I}\psi)(x)=\int_{-2}^{x}\psi(y)dy 
\text{ and } (\mathcal{I}_{0}\psi)(x)=(\mathcal{I}\psi)(x)-\int \mathcal{I}\psi\,d\beta,
\]
with $\nu_{V}(dx)=(4-x^{2})\mu_{V}(dx)$.  Clearly, by definition,  the 
operator $\mathcal{N}$ maps $C^{2}([-2,2])$ into $C([-2,2])$, 
while $\mathcal{A}_{1/(1-\frac{1}{2}\mathcal{N}V)}$ 
sends $C([-2,2])$ into $L^{2}(\nu_{V})$, because of \eqref{e4:22}.  
The natural choice here would be to have the 
operator $\mathcal{A}_{1/(1-\frac{1}{2}\mathcal{N}V)}$ 
map $C([-2,2])$ into $L^{2}(\mu_{V})$,  
but this works only for functions which vanish 
(like a power grater than $1/2$ of $x$) at the endpoints of $[-2,2]$.  
Therefore, instead of restricting the domain of definitions of all 
the other operators to accommodate this fact, we change the range 
where $\mathcal{A}_{1/(1-\frac{1}{2}\mathcal{N}V)}$ takes values.   
On the other hand, the operator $\mathcal{K}_{V}^{-1}$ is 
a bounded operator from $L^{2}(\mu_{V})$ into itself 
by Proposition~\ref{p:100}, hence it can be taken as a bounded operator 
from $L^{2}(\mu_{V})$ into $L^{2}(\nu_{V})$.     

Thus, we can now argue that on the set of continuous functions on $[-2,2]$, 
$\mathcal{K}_{V}^{-1}=-\frac{1}{2}\mathcal{A}_{1/(1-\frac{1}{2}\mathcal{N}V)}
\mathcal{N}\mathcal{I}_{0}\mathcal{I}$. 
This, combined with  
$\langle \phi,\psi \rangle_{\mu_{V}}=\langle \mathcal{A}_{(1-\frac{1}{2}\mathcal{N}V)}\phi,\psi \rangle$,  
now results with 
\[
\langle \mathcal{K}^{-1}_{V}\phi^\prime,\phi^\prime \rangle_{L^{2}(\mu_{V})}
=-\frac{1}{2}\langle\mathcal{A}_{1-\frac{1}{2}\mathcal{N}V}
\mathcal{A}_{1/(1-\frac{1}{2}\mathcal{N}V)}\mathcal{N}\mathcal{I}_{0}\mathcal{I} D\phi,D\phi \rangle
=-\frac{1}{2}\langle\mathcal{N}\mathcal{I}_{0}\mathcal{I} D\phi,D\phi \rangle, 
\]
which is independent of $V$!   The rest of \eqref{e4:5} follows as we 
pointed out by taking $V(x)=x^{2}/2$ and using Theorem~\eqref{t:2}.  

To verify \eqref{e4:3}, notice that it suffices to show that for 
any $\phi\in L^{2}(\mu_{V})$,
\[
\langle (\mathcal{M}_{V}+\mathcal{A}_{V''})^{-1}\phi,\phi \rangle_{\mu_{V}}
\le \left\langle \frac{1}{V''}\phi,\phi \right\rangle_{\mu_{V}}.
\]
Since $\mathcal{K}_{V}$ is invertible, 
any $\phi\in L^{2}(\mu_{V})$ can be written as $\phi=(\mathcal{M}_{V}+V'')\psi$, 
for some $\psi\in L^{2}(\mu_{V})$, so we need to check that 
\begin{equation}\label{e:4:500}
\langle (\mathcal{M}_{V}+V'')\psi,\psi \rangle_{\mu_{V}}\le \langle (V'')^{-1}(\mathcal{M}_{V}+V'')\psi,(\mathcal{M}_{V}+V'')\psi \rangle_{\mu_{V}}. 
\end{equation}
If the right-hand side is infinite, there is nothing to prove.  
If it is finite, then write it as 
\[
\begin{split}
&\langle (V'')^{-1}(\mathcal{M}_{V}+V'')\psi,(\mathcal{M}_{V}+V'')\psi \rangle_{\mu_{V}} \\
& \qquad \qquad = \langle (\mathcal{M}_{V}+V'')\psi,\psi \rangle_{\mu_{V}}
+\langle (V'')^{-1/2}\mathcal{M}_{V}\psi,(V'')^{-1/2}\mathcal{M}_{V}\psi \rangle_{\mu_{V}}
+\langle \mathcal{M}_{V}\psi,\psi \rangle_{\mu_{V}},
\end{split}
\]
from which \eqref{e:4:500} follows immediately.   
There is, however, a small detail 
we need to take care of, namely justifying that if the left-hand side of the 
above is finite, the equality above is well defined.   
This essentially boils down to showing  that all terms on the right-hand side are finite.   
This is indeed so because the finiteness of the left-hand side is 
equivalent to $(V'')^{-1/2}(\mathcal{M}_{V}+V'')\psi \in L^{2}(\mu_{V})$ which, in 
particular, since $V''$ is continuous and $\psi\in L^{2}(\mu_{V})$, is equivalent 
to $(V'')^{-1/2}\mathcal{M}_{V}\psi\in L^{2}(\mu_{V})$.  
This is sufficient to guarantee the validity of the equation above, ensuring in particular 
that the middle term on the right-hand side is finite.     

For the case of equality, according to \eqref{ep:21} we have to show that 
\begin{equation}\label{e4:10}
\langle \mathcal{N}V^\prime,V^\prime \rangle = 2\int V^{\prime\prime}d\mu_{V}.
\end{equation}
To do so, use \cite[Eq (1.32)]{i7} which gives 
\[
\langle \mathcal{N}\phi,\psi^\prime \rangle+\langle \mathcal{N}\psi,\phi^\prime \rangle
=\left(\int\phi^\prime d\beta\right)\left(\int x\psi'(x) \beta(dx)\right)
+\left(\int x\phi'(dx) \beta(dx)\right)\left(\int\psi^\prime d\beta\right).
\]
Taking $\phi=V^\prime$ and $\psi=V$, results with 
\[
\langle \mathcal{N}V^\prime,V^\prime \rangle+\langle \mathcal{N}V,V^{\prime\prime} \rangle
=\left(\int V^{\prime\prime}d\beta\right)\left(\int x V'(x)\beta(dx)\right)+\left(\int x 
V''(x)\beta(dx)\right)\left(\int V^{\prime}d\beta\right).
\]
On the other hand, since the equilibrium measure $\mu_{V}$ is supported 
on $[-2,2]$, invoking the equation \eqref{e4:21} yields the equality 
\[
\langle \mathcal{N}V^\prime,V^\prime \rangle+\langle \mathcal{N}V,V^{\prime\prime} \rangle
=2\int V^{\prime\prime}d\beta,
\]
or written differently,  
\[
\langle \mathcal{N}V^\prime,V^\prime \rangle = 2\int V^{\prime\prime} 
\left(1-\frac{1}{2}\mathcal{N}V\right)d\beta,
\]
which combined with \eqref{ep:40} is precisely the statement of \eqref{e4:10}.   
\qedhere

\end{proof}

The curious reader may wonder how the free Brascamp-Lieb looks like in the case where 
the support of the measure $\mu_{V}$ is not $[-2,2]$.  
Assuming that the equilibrium measure $\mu_{V}$ has support $[a,b]$ and 
that $V$ is $C^{4}$ on $[a,b]$ with  $V''(x)\ge 0$ for $x\in [a,b]$, the analog of 
inequality \eqref{e4:3} takes the form
\begin{equation}\label{e:400}
\int_{a}^{b}\int_{a}^{b}\left(\frac{\phi(x)-\phi(y)}{x-y}\right)^{2}
\frac{-2ab+(a+b)(x+y)-2xy}{8\pi^{2}\sqrt{(x-a)(b-x)}\sqrt{(y-a)(b-y)} } \, dxdy
\le \int \frac{\phi'^{2}}{V''}d\mu_{V},
\end{equation}
for any smooth function $\phi$ on $[a,b]$.  
The proof of this is simply done by a linear rescaling, namely reducing everything to the 
case $[a,b]=[-2,2]$.  More precisely, take $\theta(x)=(b-a)x/4+(b+a)/2$ which maps 
$[-2,2]$ into $[a,b]$.   
With this, \eqref{e:400} reduces to \eqref{e4:3} for 
$\tilde{V}(x)=V(\theta(x))$, $\tilde{\phi}(x)=\phi(\theta(x))$.  
Notice here that the equilibrium measure $\mu_{\tilde{V}}$ is determined 
by $\mu_{\tilde{V}}(A)=\mu_{V}(\{ x\in[a,b]:\theta^{-1}(x)\in A\})$, for any $A\subset[-2,2]$.
  
Equality in \eqref{e:400} is attained for functions $\phi$ of the form  
$\phi(x)=c_{1}+c_{2}V'(x)$, for some constants $c_{1},c_{2}$.

We close this section with an extension of \cite[Eq. 10.16]{i5} which seems mysterious 
there, but is demystified by the free Brascam-Lieb inequality discussed here.   
This inequality is related to the Wishart random matrix models and the main potential $V$ 
is defined only on the positive axis.  The interested reader can take a look at \cite{i5} 
for more details.      

\begin{corollary} Let $Q:[0,\infty)\to\R$ be a continuous function 
and let $V(x)=Q(x)-s\log(x)$, for $s>0$, be such that $\lim_{x\to\infty}(V(x)-2\log (x))=\infty$.   
Let the support of $\mu_{V}$ be $[a,b]$ and on $[a,b]$, let $Q$ be $C^{4}$ and such that $Q''\ge0$.  
Then, for any smooth function $\phi$ on $[a,b]$,  
\begin{equation}\label{e:203}
\int_{a}^{b}\int_{a}^{b}\left(\frac{\phi(x)-\phi(y)}{x-y}\right)^{2}
\frac{-2ab+(a+b)(x+y)-2xy}{8\pi^{2}\sqrt{(x-a)(b-x)}\sqrt{(y-a)(b-y)}} \, dxdy
\le  \int \frac{x^{2}\phi'(x)^{2}}{s+x^{2}Q''(x)}\mu_{V}(dx),
\end{equation}
with equality for $\phi(x)=c_{1}(Q'(x)-s/x)+c_{2}$, for some constants $c_1$ and $c_2$.  

In particular we obtain \cite[Eq. 10.16]{i5}
\begin{equation}\label{e:204}
s\int_{a}^{b}\int_{a}^{b}\left(\frac{\phi(x)-\phi(y)}{x-y}\right)^{2}
\frac{-2ab+(a+b)(x+y)-2xy}{8\pi^{2}\sqrt{(x-a)(b-x)}\sqrt{(y-a)(b-y)}} \, dxdy
\le  \int x^{2}\phi'(x)^{2}\mu_{V}(dx).
\end{equation}
 
If $Q(x)=rx+t$, for some constants $r$ and $t$, 
\eqref{e:204} is sharp with equality attained 
for $\phi(x)=c_{1}+{c_{2}/x}$.  

\end{corollary}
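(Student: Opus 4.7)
The plan is to derive \eqref{e:203} as a direct application of the generalized free Brascamp--Lieb inequality \eqref{e:400} to the potential $V(x)=Q(x)-s\log x$. First, I would verify the required regularity and convexity hypotheses. Since $-s\log x\to +\infty$ as $x\to 0^{+}$, the variational characterization \eqref{e0:4} forces the equilibrium measure $\mu_V$ to stay away from the origin, so the support $[a,b]$ satisfies $a>0$. Consequently $V$ is $C^{4}$ on $[a,b]$, and
\[
V''(x)=Q''(x)+\frac{s}{x^{2}}\ge 0
\]
on $[a,b]$, so $V$ is convex there. This places us squarely in the setting of \eqref{e:400}.

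Second, I would compute $1/V''(x)=x^{2}/(s+x^{2}Q''(x))$, so that the right-hand side of \eqref{e:400} applied to this $V$ becomes
\[
\int \frac{\phi'(x)^{2}}{V''(x)}\,\mu_{V}(dx)=\int \frac{x^{2}\phi'(x)^{2}}{s+x^{2}Q''(x)}\,\mu_{V}(dx),
\]
which is exactly the right-hand side of \eqref{e:203}. The left-hand side of \eqref{e:400} matches that of \eqref{e:203} verbatim, so the inequality \eqref{e:203} follows at once. The equality case from Theorem~\ref{t:11}, transported through the rescaling used to prove \eqref{e:400}, asserts equality for $\phi=c_{1}V'+c_{2}=c_{1}(Q'(x)-s/x)+c_{2}$, which is the stated condition.

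Third, for \eqref{e:204} I would use the pointwise bound $s+x^{2}Q''(x)\ge s$ on $[a,b]$ (a consequence of $Q''\ge 0$) to estimate the right-hand side of \eqref{e:203}:
\[
\int \frac{x^{2}\phi'(x)^{2}}{s+x^{2}Q''(x)}\,\mu_{V}(dx)\;\le\;\frac{1}{s}\int x^{2}\phi'(x)^{2}\,\mu_{V}(dx),
\]
and multiply through by $s$ to obtain \eqref{e:204}. When $Q(x)=rx+t$, then $Q''\equiv 0$ and the bounds \eqref{e:203} and \eqref{e:204} coincide; since $Q'(x)=r$, the equality case reduces to $\phi(x)=c_{1}(r-s/x)+c_{2}$, which, after absorbing the term $c_{1}r$ into the additive constant, takes the stated form $c_{1}+c_{2}/x$.

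There is essentially no serious obstacle in this argument beyond verifying the hypotheses of Theorem~\ref{t:11} in its rescaled incarnation \eqref{e:400}. The only subtle point worth flagging is confirming that the logarithmic singularity of $V$ at $0$ forces $a>0$ and hence that $V$ is genuinely $C^{4}$ on the closed support; this is standard for strongly repulsive potentials, but should be recorded so that the application of \eqref{e:400} is rigorously justified.
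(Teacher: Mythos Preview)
Your proposal is correct and follows essentially the same route as the paper: apply the rescaled free Brascamp--Lieb inequality \eqref{e:400} with $V''(x)=Q''(x)+s/x^{2}$ to obtain \eqref{e:203}, then use $V''(x)\ge s/x^{2}$ to pass to \eqref{e:204}, with the equality cases read off from Theorem~\ref{t:11}. Your additional care in checking $a>0$ so that $V$ is genuinely $C^{4}$ on $[a,b]$ is a point the paper leaves implicit.
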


\begin{proof}  From  \eqref{e:400} and since  
$V''(x)=Q''(x)+s/x^{2}\ge s/x^{2}$ one immediately deduces 
\eqref{e:203}.  Equality in \eqref{e:204} is attained if $Q''(x)=0$ and $\phi(x)=c_{1}+c_{2}/x$.

\end{proof}

\section{Final Remarks}  It is clearly of interest to discuss a multidimensional 
version of the 
free Poincar\'e inequality and extensions, as for instance in the 
spirit of \cite{i10}.  
This requires more work and it will eventually be done in a separate publication.   

There is a version of the free Poincar\'e inequality, introduced by Biane in \cite{Biane2},  
and in the one dimensional case it is different from the one presented here.   
It is interesting to point out that in several dimensions, the fluctuations of 
jointly independent random matrices, more precisely the limiting variance of the fluctuations, 
are the main ingredients for the formulation of the free Poincar\'e inequality.   
This already appears in the literature in two different forms.  
One is in \cite{MR2216446}, which describes it in terms of second order freeness.  
The other is investigated in \cite{MR2353386}, and the variance term is 
given in a form similar to the one presented in \eqref{e4:5}.   

There are however some noticeable differences between the one dimensional case and 
the multidimensional case.   If we interpret the variance term in the 
Poincar\'e inequality described in \eqref{e4:5} as 
$\langle\mathcal{K}_{V}^{-1}\phi',\phi'\rangle_{\mu_{V}}$, then the key statement is 
that, as long as the support of the measure $\mu_{V}$ is $[-2,2]$, this variance does not 
depend on the other details of the potential $V$.  
This is, in some sense, reminiscent of the universality of fluctuations in 
random matrix theory.   As it turns out, this fact does not seem to take place in 
several dimensions which means that the approach for proving 
Theorem~\ref{t:11} is not going to work.  

To fully understand the multidimensional case it seems desirable to 
unify the two points of view mentioned above, namely, the second order 
freeness and the analog of the variance through the inverse of a properly defined operator, 
at least for some natural cases of potentials.   

\section*{Acknowledgments}  We would like to thank the anonymous referees for pointing 
out typos, errors, ambiguities and complementing this with very useful suggestions 
which improved this paper considerably.

\bibliographystyle{amsplain}

\end{document}